\newtheorem{lemma}{Lemma}[section]
\newtheorem{theorem}[lemma]{Theorem}
\newtheorem{prop}[lemma]{Proposition}
\newtheorem{cor}[lemma]{Corollary}
\newtheorem{claim*}{Claim}
\newtheorem{remark}[lemma]{Remark}
\newtheorem{thm}[lemma]{Theorem}
\newtheorem{defn}[lemma]{Definition}
\newtheorem{example}[lemma]{Example}
\newtheorem{notation}[lemma]{Notation}
\newtheorem{question}[lemma]{Question}
\newcommand{\PP}{{\mathbb P}}
\newcommand{\C}{{\mathbb C}}
\newcommand{\F}{{\mathbb F}}
\newcommand{\Q}{{\mathbb Q}}
\newcommand{\R}{{\mathbb R}}
\newcommand{\Z}{{\mathbb Z}}
\newcommand{\pp}{{\mathfrak p}}
\newcommand{\qq}{{\mathfrak q}}
\newcommand{\mm}{{\mathfrak m}}
\newcommand{\calO}{{\mathcal O}}
\newcommand{\vareps}{{\varepsilon}}
\DeclareMathOperator{\Char}{char}
\DeclareMathOperator{\Hom}{Hom}
\DeclareMathOperator{\Gal}{Gal}
\DeclareMathOperator{\Nm}{Nm}
\DeclareMathOperator{\ord}{ord}
\DeclareMathOperator{\red}{red}
\newcommand{\idele}{id\`ele }
\numberwithin{equation}{section}
\numberwithin{table}{section}
\title{A universal first order formula defining the ring of integers in a number field}
\subjclass[2010]{Primary 11R37; Secondary 11R52, 11U05}
\keywords{Hilbert's Tenth Problem, Diophantine set, quaternion algebra, class field theory, Artin reciprocity, Hilbert symbol}
\author{Jennifer Park}
\curraddr{Department of Mathematics, Massachusetts Institute of Technology, Cambridge, MA 02139-4307, USA}
\email{jmypark@math.mit.edu}
\urladdr{http://math.mit.edu/~jmypark}
\date{\today}
\begin{document}
\begin{abstract}
We show that the complement of the ring of integers in a number field $K$ is Diophantine. This means the set of ring of integers in $K$ can be written as $\{t \in K \mid \forall x_1, \cdots, x_N \in K, f(t,x_1, \cdots, x_N) \neq 0\}$. We will use global class field theory and generalize the ideas originating from Koenigsmann's recent result giving a universal first order formula for $\Z$ in $\Q$.
\end{abstract}
\maketitle
\section{Introduction}

Hilbert's tenth problem asked for an algorithm that decides whether an integer solution to a polynomial equation $f(x_1, \cdots, x_n) = 0$ exists, given $f \in \Z[x_1, \cdots, x_n]$. Matiyasevich answered the question in the negative, building on earlier results by Davis, Putnam and Robinson, by showing the equivalence of Diophantine sets in $\Z$ and listable sets in $\Z$; a set $A \subseteq \Z$ is said to be \textit{Diophantine} if there exists a polynomial $f \in \Z[t,x_1, \cdots, x_n]$ such that $A = \{t \in \Z \mid \exists x_1, \cdots, x_n, f(t,x_1, \cdots, x_n) = 0\}$, and $A$ is \textit{listable} if there is an algorithm that eventually prints all the elements and only the elements of $A$. With this equivalence, and the undecidability of the halting problem, one easily finds a listable set $A$ whose membership cannot be determined by any algorithm. By Matiyasevich's theorem, $A$ is also a Diophantine set, defined by some polynomial $f(t, x_1, \cdots, x_n)$. Then there is no algorithm for deciding which polynomials of the form $f(a, x_1, \cdots, x_n)$ with $a \in \Z$ has integer solutions.

Matiyasevich's work leads to the natural extensions of Hilbert's tenth problem to other settings where the notion of listable sets makes sense. More precisely stated, we ask:

\begin{question}
Let $R$ be a commutative and countable ring, with a fixed representation of elements of $R$ by integers. Given a polynomial $f \in R[x_1, \cdots, x_n]$, is there an algorithm that decides whether $f$ has a solution in $R$?
\end{question}

One possibility for $R$ is the ring of integers $\calO_K$ of a number field $K$. In this setting, the above problem is open, although in many cases, it has been answered in the negative. Assuming the Shafarevich-Tate conjecture, it is in fact shown by Mazur and Rubin in \cite{MazRub10} that Hilbert's tenth problem has a negative answer over the ring of integers of every number field.

We can also consider Hilbert's tenth problem over $\Q$, or, even more generally, over a number field $K$. This problem is of interest, because of its equivalence to the big problem of arithmetic geometry asking for an algorithm for deciding the existence of a rational point on a variety defined over $K$. This problem remains open. However, we can attempt to perform a reduction via the following method: if $\calO_K$ were a Diophantine set in $K$ (where the notion of a Diophantine set is defined analogously as before, by replacing $\Z$ by $K$), then given a polynomial over $K$, we could impose the extra condition that all the variables actually take values in $\calO_K$, and now the answer to Hilbert's tenth problem over $K$ depends on the answer to Hilbert's tenth problem over $\calO_K$.

It is unknown whether $\calO_K$ is Diophantine in $K$, even in the case $K = \Q$. Robinson \cite{Rob49} found a first-order definition of $\Z$ in $\Q$, given by an $\forall \exists \forall$-formula. Recently, Poonen \cite{Poo09} gave an $\forall \exists$-definition of $\calO_K$ in $K$, and Koenigsmann \cite{Koe10} extended this result to give an $\forall$-definition of $\Z$ in $\Q$. In this paper, we generalize Koenigsmann's results to the setting of number fields to give an $\forall$-definition of $\calO_K$ in $K$. We prove the following theorem:
\begin{thm}
There is a first-order universal formula defining $\calO_K$ in $K$. That is, $K - \calO_K$ is Diophantine in $K$.
\end{thm}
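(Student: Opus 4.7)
My plan is to generalize Koenigsmann's argument for $\Z \subset \Q$ to the setting $\calO_K \subset K$, which in turn refines Poonen's $\forall\exists$-definition via quaternion algebras. For each $a,b \in K^*$, consider the quaternion algebra $H_{a,b}$ with $i^2=a$, $j^2=b$, $ij=-ji$, and let $\Delta_{a,b}$ be the (finite, even) set of places where $H_{a,b}$ ramifies. The key Diophantine set is $T_{a,b} \subseteq K$, consisting of elements that occur as the reduced trace of a suitable element of $H_{a,b}$ whose reduced norm satisfies a polynomial condition. Hasse's local-global principle for quaternion algebras, combined with Artin reciprocity, shows that up to controlled error $T_{a,b}$ equals $K \cap \bigcap_{\pp \in \Delta_{a,b}} \calO_{K,\pp}$. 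Poonen's theorem then produces an $\forall\exists$-definition of $\calO_K$ by writing $\calO_K = \bigcap_{(a,b)} T_{a,b}$ as $(a,b)$ ranges over pairs whose ramification sets sweep out the finite places of $K$.

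To upgrade to a single universal formula, I would follow Koenigsmann in treating the parameters $(a,b)$ themselves as existentially quantified variables in a Diophantine definition of the complement $K \setminus \calO_K$. If $t \notin \calO_K$, then some prime $\pp$ of $K$ divides the denominator of $t$; one must Diophantinely produce a pair $(a,b)$ with $\pp \in \Delta_{a,b}$ together with witnesses certifying $t \notin T_{a,b}$. Here one uses that the Hilbert symbol condition $(a,b)_\pp = -1$ is itself existentially definable through solvability of $ax^2+by^2=z^2$ in $K_\pp$, and that by class field theory one can prescribe the global ramification set by prescribing local symbols whose product is $1$. The substantive work is to merge these auxiliary Diophantine conditions with the condition $t \notin T_{a,b}$ into one polynomial equation whose solvability is equivalent to $t \notin \calO_K$.

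The principal obstacles, compared to the case $K=\Q$, are: (i) the possible nontriviality of $\Cl(\calO_K)$, which prevents us from silently identifying ideals with elements and complicates the choice of generators of denominators; (ii) the higher rank of $\calO_K^*$ given by Dirichlet's theorem, so that Koenigsmann's treatment of units and signs as finitely many cases does not transpose directly; and (iii) the presence of complex archimedean places, at which every quaternion algebra automatically splits, restricting which $\Delta_{a,b}$ are achievable. I expect (i) and (ii) to be the main technical hurdles. I would address them by working systematically with rings of $S$-integers $\calO_{K,S}$ and invoking Chebotarev's density theorem to produce auxiliary primes with prescribed splitting behavior, so that conditions on units and on class representatives get encoded through local integrality at these auxiliary primes rather than through the more rigid elementary arguments available over $\Q$. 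The truly delicate step, where class field theory enters essentially, will be aligning the local Hilbert symbol conditions at the primes dividing the denominator with the global constraint imposed by Artin reciprocity, so that the Diophantine witnesses for $(a,b)$ always exist exactly when $t \notin \calO_K$.
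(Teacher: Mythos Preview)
Your proposal assembles the right ingredients (quaternion algebras, $T_{a,b}$, Hilbert symbols, Chebotarev, class field theory) but has a genuine gap at exactly the step that upgrades Poonen's $\forall\exists$-definition to a purely universal one. You write that for $t \notin \calO_K$ one should ``Diophantinely produce a pair $(a,b)$ with $\pp \in \Delta_{a,b}$ together with witnesses certifying $t \notin T_{a,b}$.'' But $T_{a,b}$ is itself Diophantine, so $t \notin T_{a,b}$ is a \emph{universal} condition; you give no mechanism for witnessing it existentially, and there is no direct one. This is precisely where Koenigsmann's idea, carried over in the paper, does something nontrivial that your sketch omits. One does \emph{not} try to certify $t \notin T_{a,b}$. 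Instead one shows that for the semilocal ring $R = \bigcap_{\pp \in \Delta} (\calO_K)_{\pp}$ the Jacobson radical $J(R)$ is Diophantine in $K$, and then observes that
\[
\widetilde{R} \;:=\; \{x \in K : \nexists\, y \in J(R),\ xy=1\} \;=\; \bigcup_{\pp \in \Delta} (\calO_K)_{\pp}
\]
(a \emph{union}, not an intersection), provided $\Delta \neq \emptyset$. Now $t \notin \widetilde{R}$ is the existential condition $\exists\, y \in J(R),\ ty=1$. Writing $\calO_K$ as an intersection of such $\widetilde{R}$'s, indexed by a Diophantine family of parameters, gives the universal formula. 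The substantial work is to produce, for each prime $\pp$, parameters in the family whose $\Delta$ is small enough (in fact $\{\pp\}$, or a pair of $\Delta$'s intersecting in $\{\pp\}$) so that the intersection really cuts down to $\calO_K$, while also ensuring $\Delta \neq \emptyset$ for every parameter in the family so that $\widetilde{R} \neq K$.

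There is also a structural divergence from the paper that is worth flagging. The paper does not vary $(a,b)$ over all of $K^\times \times K^\times$. Rather, $a$ and $b$ are \emph{fixed} once (totally positive, independent modulo squares, in $1+8\calO_K$, with coprime ideals, and chosen via Chebotarev in the compositum with the Hilbert class field so that every ideal class contains primes with each prescribed Frobenius in $K(\sqrt a,\sqrt b)/K$). What varies is an auxiliary $p$, or a pair $(p,q)$, and the finite primes are partitioned into four sets $\PP^{[i,j]}$ according to their image under the Artin map $C_{\mm} \to \Gal(K(\sqrt a,\sqrt b)/K)$. This ray-class partition is the correct generalization of Koenigsmann's mod-$8$ classes and is what absorbs your obstacle (i): the class group is handled not by $S$-integers but by the choice of $a,b$ together with Lemma~\ref{L: independentsplitting}(3). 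Your obstacle (ii), the unit rank, does not in fact intervene; Koenigsmann's sign arguments are replaced simply by requiring $a,b,p,q$ totally positive, so that no real places enter the $\Delta$'s. Obstacle (iii) is likewise handled by total positivity and is not a serious issue. The delicate case is $\psi(\pp)=(1,1)$, handled in the paper by a two-parameter family $(p,q) \in \Psi$ and an application of the Tate theorem on prescribing Hilbert symbols.
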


Our proof uses many ideas from \cite{Koe10}, including the use of quaternion algebras, and the fact that a Diophantine definition of the Jacobson radical of a ring implies universal first-order definition of a related ring. 

A key construction in \cite{Koe10} in obtaining a universal first-order formula for $\Z$ in $\Q$ is to split up the odd prime numbers into four sets, depending on their values mod $8$. This construction does not at all generalize to the setting of $\calO_K$ in $K$ for several reasons. The main obstruction comes from the fact that in general, one cannot expect a prime ideal to be principal. Thus, it is no longer possible to split up the prime ideals based on simple modular arithmetic. Further, even in the simplified cases of the class number of $K$ being one, the number $8$ does not have a natural interpretation; no straightforward generalizations of \cite{Koe10} seem to exist.

In order to clear up these issues, we introduce global class field theory and the notion of ray classes, which replaces the congruence classes modulo $8$. This offers a simpler and a more natural alternative to the construction in \cite{Koe10}, and this approach indeed generalizes to all number fields. The proofs that follow are further aided by a theorem, proved and communicated by Tate, which allows one to find an element $x \in K$ that has prescribed Hilbert symbols against finitely many elements of $K$. The special case where $K = \Q$ is well-known; for example, see \cite{Ser73}, Theorem 4, page 24, but the general case of $K$ being a global field does not appear in the literature.

Section 2 is based on the ideas from \cite{Poo09}, and Section 3 and 4 are generalizations of \cite{Koe10}.


\section{A Universal-Existential Definition of \texorpdfstring{$\calO_K$}{OK} in \texorpdfstring{$K$}{K}}

Throughout, $K$ is a fixed number field, $\calO_K$ denotes its ring of integers. For a prime ideal $\pp \subseteq \calO_K$, and its associated valuation $v$, $\calO_{v}$ is the set of the elements of the completion $K_{v}$ whose valuations are nonnegative. We denote $(\calO_K)_{\pp}$ to be the localization of the ring of integers of $K$ at the prime $\pp$. Then $(\calO_K)_{\pp} \subset \calO_v$.

\begin{notation}
Let $\PP$ be the set of all finite places of $K$, and let $\PP \cup \infty$ be the set of all places of $K$, both finite and infinite. Prime ideals and their corresponding valuations are used interchangeably. Further, for $a,b \in K^{\times}$,
\begin{itemize}
\item $H_{a,b} := K \cdot 1 \oplus K \cdot \alpha \oplus K \cdot \beta \oplus K \cdot \alpha \beta$ is the quaternion algebra over $K$ with multiplication defined by $\alpha^2 = a, \beta^2 = b$, and $\alpha \beta = -\beta \alpha$.
\item $\Delta_{a,b} := \{v \in \PP \cup \infty \mid H_{a,b} \otimes K_v \text{ does not split}\}$. We note that $\Delta_{a,b}$ is always finite.
\item $S_{a,b} := \{2x_1 \in K \mid \exists x_2, x_3, x_4 \in K \text{ with } x_1^2 - ax_2^2 - bx_3^2 + abx_4^2 = 1\}$ is the set of traces of norm-$1$ elements of $H_{a,b}$.
\item $T_{a,b} := S_{a,b} + S_{a,b}$.
\end{itemize}
We note in particular that $S_{a,b}$ and $T_{a,b}$ are Diophantine.
\end{notation}

For each place $v$ of $K$, we can similarly define $S_{a,b}(K_v)$ and $T_{a,b}(K_v)$ by replacing $K$ by $K_v$. For each infinite place $\sigma$ we define
$$ \calO_{\sigma} :=
 \begin{cases}
 \mathbb{R}, & \mbox{if } \sigma \mbox{ is a real place, and } \sigma(a)>0 \mbox{ or } \sigma(b)>0\\
 [-4,4], & \mbox{if } \sigma \mbox{ is a real place, and } \sigma(a),\sigma(b) < 0 \\
 \mathbb{C}, & \mbox{if } \sigma \mbox{ is a complex place.}
 \end{cases}$$

Let $v$ be a finite place of $K$, and let $\F_v$ be the residue field of $v$ of size $q$, which is some power of a prime $p$. Then denote the reduction map by $\red_v \colon \calO_v \to \F_v$. Further, define
$$U_v:=\{s \in \F_v \mid x^2 - sx + 1 \mbox{ is irreducible over } \F_v\}.$$

\begin{lemma}
\label{L:completion}
\hfill
\begin{itemize}
\item[(a)] If $v \notin \Delta_{a,b},$ then $S_{a,b}(K_v) = K_v$.
\item[(b)] If $v \in \Delta_{a,b} \cap \PP,$ then $\red_{v}^{-1}(U_v) \subseteq S_{a,b}(K_v) \subseteq (\calO_K)_v$.
\item[(c)] For an infinite place $\sigma$,
$$ S_{a,b}(K_{\sigma}) =
 \begin{cases}
 \mathbb{R}, & \mbox{if } \sigma \mbox{ is a real place, and } \sigma(a)>0 \mbox{ or } \sigma(b)>0\\
 [-2,2], & \mbox{if } \sigma \mbox{ is a real place, and } \sigma(a),\sigma(b) < 0 \\
 \mathbb{C}, & \mbox{if } \sigma \mbox{ corresponds to a complex embedding.}
 \end{cases}$$
 \item[(d)] For any $v$ with $\# \F_v > 11$, we have $\F_v = U_v + U_v$.
 \item[(e)] For each $a, b \in K^{\times}$ such that $\sigma(a)>0$ or $\sigma(b)>0$ for each real archimedean place $\sigma$,
 $$S_{a,b} = K \cap \bigcap_{v \in \Delta_{a,b}} S_{a,b}(K_v).$$
\end{itemize}
\end{lemma}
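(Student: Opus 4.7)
My approach rests on the observation that an element $\alpha \in H_{a,b}$ has reduced norm $1$ and reduced trace $t$ if and only if it satisfies $\alpha^2 - t\alpha + 1 = 0$; consequently $t \in S_{a,b}$ (respectively $t \in S_{a,b}(K_v)$) is equivalent to the existence of a $K$-algebra embedding of $L_t := K[x]/(x^2-tx+1)$ into $H_{a,b}$ (respectively $H_{a,b} \otimes K_v$). With this translation in hand, I would treat (a)--(d) using standard facts about local quaternion algebras, and reserve Hasse--Minkowski for (e).

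For (a), when $H_{a,b} \otimes K_v \simeq M_2(K_v)$, the companion matrix of $x^2 - tx + 1$ has trace $t$ and determinant $1$, so every $t \in K_v$ lies in $S_{a,b}(K_v)$. For (b), if $H_{a,b} \otimes K_v$ is a division algebra, then the reduced norm extends $v$ uniquely and norm-$1$ elements lie in the unique maximal order, forcing their reduced traces into $\calO_v$; conversely, for $s \in \calO_v$ with $\bar s \in U_v$, the polynomial $x^2 - sx + 1$ is irreducible modulo $\pp$, so $K_v[x]/(x^2-sx+1)$ is the unramified quadratic extension of $K_v$, and since this unramified extension splits the unique quaternion division algebra over $K_v$, it embeds into $H_{a,b} \otimes K_v$ and realizes $s$ as a reduced trace. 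For (c), a real place with $\sigma(a) > 0$ or $\sigma(b) > 0$ (and every complex place) splits $H_{a,b}$ and falls under (a); otherwise $H_{a,b} \otimes K_\sigma$ is Hamilton's quaternions, whose norm-$1$ sphere $S^3$ has reduced trace surjecting onto $[-2,2]$.

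For (d), note that $s \in U_v$ iff $s^2 - 4$ is a nonzero non-square in $\F_v$. Writing the indicator of $U_v$ as $\tfrac{1}{2}(1 - \chi(s^2 - 4))$ away from $s = \pm 2$, where $\chi$ is the quadratic character on $\F_v$, and using the standard evaluation $\sum_s \chi(s^2 - 4) = -1$, gives $|U_v| = (q-1)/2$. To prove $U_v + U_v = \F_v$, I would count representations $c = u_1 + u_2$ by expanding the product of indicators; the main term is $q/4$ and the error is controlled by $\sum_s \chi((s^2-4)((c-s)^2-4)) = O(\sqrt{q})$ via Weil's bound, so the representation count is strictly positive for $q$ sufficiently large, with $q > 11$ as the explicit threshold (verifiable for small cases by direct inspection).

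The main obstacle is (e). The inclusion $S_{a,b} \subseteq K \cap \bigcap_{v \in \Delta_{a,b}} S_{a,b}(K_v)$ is immediate. For the reverse, I would substitute $x_1 = t/2$ into the defining equation of $S_{a,b}$ to convert membership into the representation problem: the ternary form $-a Y_2^2 - b Y_3^2 + ab Y_4^2$ represents $1 - t^2/4$ over $K$. By Hasse--Minkowski for representation of a value by a regular ternary form, this is equivalent to local representation at every place of $K$. Part (a) makes places $v \notin \Delta_{a,b}$ automatic; the archimedean hypothesis forces every real place to split $H_{a,b}$ and thus lie outside $\Delta_{a,b}$, while complex places are never in $\Delta_{a,b}$. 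Hence the only genuine local constraints are at $v \in \Delta_{a,b} \cap \PP$, giving the claimed equality. The only subtlety is the degenerate case $t = \pm 2$, where $1 - t^2/4 = 0$ and one wants a nontrivial solution; this is handled directly by exhibiting $\alpha = \pm 1 \in H_{a,b}$ with reduced trace $\pm 2$ and reduced norm $1$.
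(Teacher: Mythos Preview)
Your proposal is correct and follows the same route as the paper, which proves (a) exactly as you do, cites \cite{Poo09} for (b) and (d), reduces the split cases of (c) to (a) and computes the Hamiltonian case directly, and invokes Hasse--Minkowski for (e). One small caveat on (d): your expansion via the quadratic character $\chi(s^2-4)$ tacitly assumes odd residue characteristic; for even $q>11$ (i.e.\ $q=16,32,\ldots$) irreducibility of $x^2+sx+1$ is governed by the Artin--Schreier condition $\Tr_{\F_q/\F_2}(s^{-2})=1$ rather than the discriminant, so those residue fields require a parallel additive-character count (or a direct appeal to Poonen's characteristic-free argument in \cite{Poo09}).
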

\begin{proof}
\hfill
\begin{itemize}
\item[(a)] It is clear from the definition of $S_{a,b}$ that $S_{a,b}(K_v) \subseteq K_v$. Now, to prove the reverse inclusion, take any element $s \in K_v$. We will show that there is an element in the quaternion algebra $H_{a,b}$ over $K_v$ whose reduced trace is $s$. Since $v \notin \Delta_{a,b}$, we have $H_{a,b} \otimes K_v \cong M_2(K_v)$, so every monic quadratic polynomial is a characteristic polynomial of some element of the matrix ring. In particular, $K_v \subseteq S_{a,b}(K_v)$, which shows the equality of part (a).
\item[(b)] See \cite{Poo09}, Lemma 2.1.
\item[(c)] The first and third cases are handled by (a). The second case is a straightforward computation.
\item[(d)] See \cite{Poo09}, Lemma 2.3.
\item[(e)] This is a special case of the Hasse-Minkowski local-global principle.
\end{itemize}
\end{proof}

\begin{prop} 
\label{P: Tab}
For any $a,b \in K^{\times}$ such that $\sigma(a)>0$ or $\sigma(b)>0$ for each real archimedean place $\sigma$,
$$T_{a,b} = \bigcap_{\pp \in \Delta_{a,b}}\calO_{\pp}.$$
\end{prop}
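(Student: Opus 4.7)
The plan is to prove both inclusions, with the reverse direction being the substantial one. Under the positivity hypothesis on $a$ and $b$, Lemma~\ref{L:completion}(c) forces every real archimedean place to lie outside $\Delta_{a,b}$ (complex places never lie inside), so the index set on the right-hand side reduces to a finite set of finite primes of $K$. For the forward inclusion $T_{a,b} \subseteq \bigcap_{\pp \in \Delta_{a,b}} \calO_\pp$, I would combine Lemma~\ref{L:completion}(e) and (b) to conclude $S_{a,b} \subseteq K \cap \calO_v = \calO_\pp$ for each $\pp \in \Delta_{a,b}$ (with $v = v_\pp$ and $\calO_\pp = (\calO_K)_\pp$), and then use that each localization is closed under addition to deduce $T_{a,b} = S_{a,b} + S_{a,b} \subseteq \bigcap_\pp \calO_\pp$.

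For the reverse inclusion, given $t \in \bigcap_{\pp \in \Delta_{a,b}} \calO_\pp$, the plan is to construct $s_1 \in S_{a,b}$ such that $t - s_1 \in S_{a,b}$ as well, so that $t = s_1 + (t - s_1) \in T_{a,b}$. At each $\pp \in \Delta_{a,b}$ with $\#\F_\pp > 11$, I would invoke Lemma~\ref{L:completion}(d) to decompose the residue as $\red_\pp(t) = \bar{u}_{1,\pp} + \bar{u}_{2,\pp}$ with $\bar{u}_{1,\pp}, \bar{u}_{2,\pp} \in U_\pp$. Weak approximation on the finite set $\Delta_{a,b}$ then produces $s_1 \in K$ whose reduction modulo each such $\pp$ equals $\bar{u}_{1,\pp}$; necessarily $\red_\pp(t - s_1) = \bar{u}_{2,\pp} \in U_\pp$. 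Lemma~\ref{L:completion}(b) places both $s_1$ and $t - s_1$ in $\red_\pp^{-1}(U_\pp) \subseteq S_{a,b}(K_\pp)$ at each such prime, and Lemma~\ref{L:completion}(e) then upgrades these local memberships to global ones: $s_1, t - s_1 \in S_{a,b}$, yielding $t \in T_{a,b}$.

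The main obstacle is the treatment of primes $\pp \in \Delta_{a,b}$ whose residue field satisfies $\#\F_\pp \leq 11$, where Lemma~\ref{L:completion}(d) no longer guarantees $\F_\pp = U_\pp + U_\pp$ and the naive residue decomposition may fail. I expect to handle these either by exploiting that $S_{a,b}(K_\pp)$ is strictly larger than $\red_\pp^{-1}(U_\pp)$ and selecting local witnesses from the interior of its complement, or by verifying directly, from the structure of the maximal order in the local quaternion division algebra $H_{a,b} \otimes K_v$ and the surjectivity of its reduced trace onto $\calO_v$, that $S_{a,b}(K_\pp) + S_{a,b}(K_\pp) = \calO_v$ at every finite prime in $\Delta_{a,b}$. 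Once such local decompositions are available at all ramified primes, the weak approximation step of the previous paragraph extends uniformly and completes the reverse inclusion.
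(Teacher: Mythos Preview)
Your overall strategy coincides with the paper's: the forward inclusion via Lemma~\ref{L:completion}(b),(e), and the reverse inclusion by producing local witnesses in $S_{a,b}(K_v)$ at each $v\in\Delta_{a,b}$, then globalizing with approximation and Lemma~\ref{L:completion}(e). For primes with $\#\F_v>11$ your argument is exactly the paper's.

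The only place where your proposal is incomplete is the small-residue-field case, and neither of your two suggested fixes quite lands. The second one --- appealing to surjectivity of the reduced trace from the maximal order onto $\calO_v$ --- does not by itself help, because $S_{a,b}(K_v)$ consists of traces of \emph{norm-one} elements, not of arbitrary integral elements; surjectivity of the unrestricted trace tells you nothing about whether two norm-one traces suffice. Your first suggestion (use that $S_{a,b}(K_v)\supsetneq\red_v^{-1}(U_v)$) is the right instinct but needs to be made concrete. The paper does exactly this with one simple observation: $\pm 1\in H_{a,b}$ always have reduced norm $1$ and reduced trace $\pm 2$, so $\pm 2\in S_{a,b}(K_v)$ unconditionally. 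It then sets $V_v=\red_v^{-1}(U_{q_v})\cup\{\pm 2\}$ for $q_v\le 11$ and verifies by an explicit table of $U_{q_v}$ for $q_v\in\{2,3,4,5,7,8,9,11\}$ that $(U_{q_v}\cup\{\pm 2\})+U_{q_v}=\F_v$ in each case. With $V_v\subseteq S_{a,b}(K_v)$ and $V_v+V_v=\calO_v$ established at every $v\in\Delta_{a,b}$, the approximation step (the paper uses strong approximation, but your weak approximation is already enough since the conditions are open at finitely many places) and Lemma~\ref{L:completion}(e) finish as you described.
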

\begin{proof}

Let $T_{a,b}'$ be the right-hand side. By Lemma \ref{L:completion} (b) and (e), we have $S_{a,b} \subseteq T_{a,b}'$, so $T_{a,b} \subseteq T_{a,b}'$.

To prove the converse inclusion, we first compute $U_v$ for $\#\F_v<11$. Since $U_v$ only depends on $\F_v$, we may write $U_v = U_{q_v}$, where $q_v = \#\F_v$. We get:
\begin{eqnarray*}
U_2 &=& \{1\}\\
U_3 &=& \{0\}\\
U_4 &=& \{a, a+1\}, \text{ where }a^2 + a + 1 = 0\\
U_5 &=& \{1,4\}\\
U_7 &=& \{0,3,4\}\\
U_8 &=& \{1, a, a^2, a^2 + a\}, \text{ where } a^3 + a + 1 = 0\\
U_9 &=& \{a, a+2, 2a, 2a+1\}, \text{ where }a^2 + 1 = 0\\
U_{11} &=& \{0,1,5,6,10\}.
\end{eqnarray*}

We have $\pm 2 \in S_{a,b}(K_v)$, since $\pm 2$ is the reduced trace of $\pm 1$.
So for each finite place $v$, define $V_v \subseteq \calO_v$ as follows:
$$ V_v =
 \begin{cases}
 \red_v^{-1}(U_{q_v}) \cup \{\pm 2\} & \mbox{if } v|p, 2 \leq p \leq 11\\
 \red_v^{-1}(U_{q_v}) & \mbox{if } v|p, p>11.
 \end{cases}$$
Then by the discussions in the previous paragraph, $V_v \subseteq S_{a,b}(K_v)$, and a case-by-case check on each $2 \leq q_v \leq 11$ shows that
$$(U_{q_v} \cup \{\pm 2\}) + U_{q_v} = \F_v,$$
so $V_v + V_v = \calO_v$ for $v$ with $2 \leq q_v \leq 11$. If $v$ is such that $q_v > 11$, then by Lemma \ref{L:completion}(d), $V_v + V_v = \calO_v$. 

So let $t \in T_{a,b}'$. Then for each $v \in \Delta_{a,b}$, we may choose $r_v \in \calO_v$ such that $r_v, t-r_v \in V_v$. Since $\Delta_{a,b}$ is finite, we use strong approximation to find $r \in \calO$ such that for all $v \in \Delta_{a,b}$, we have $r, t-r \in V_v$. Then by Lemma \ref{L:completion}(e), we have $r, t-r \in S_{a,b}$, which proves the inclusion $T_{a,b}' \subseteq T_{a,b}$.
\end{proof}


\section{Consequences Arising from Global Class Field Theory}

\subsection{Background: Hilbert symbols and class field theory}

In \cite{Ser79}, Corollary to Lemma XIV.3.2, an explicit formula for Hilbert symbols is given: For a number field $K$ and a finite place $v = v_{\pp}$ not lying above $2$, 
\begin{eqnarray}
\label{E: serre}
(a,b)_v &=& \left((-1)^{v(a)v(b)} \red_v \left(\frac{a^{v(b)}}{b^{v(a)}}\right)\right)^{\frac{q-1}{2}},
\end{eqnarray}
where $q = \# \F_v$. Then for a $\pp$-adic unit $a$,
$$(a,p)_v = -1 \Leftrightarrow v(p) \textup{ is odd, and } \red_v(a) \textup{ is not a square in the residue field } \F_{v}.$$

Also, we make the following observation:
\begin{eqnarray*}
\bar{a} \in \F_{v_{\pp}}^2 &\Leftrightarrow & a \in K_v^2 \text{ (Hensel's lemma)}\\
&\Leftrightarrow & \pp_v \textup{ splits in } K(\sqrt a)/K,
\end{eqnarray*}
where $\pp_v$ is the prime ideal associated to $v$.

Let us start by defining some notation arising from global class field theory. Let $K$ be a global field, and let $S$ be a finite set of primes of $K$. Then we define $I^S$ to be the group of fractional ideals of $K$ whose factorizations do not contain primes from $S$. Let $\mm = \mm_0 \mm_{\infty}$ be a modulus of $K$, where $\mm_0$ denotes the finite part, and $\mm_{\infty}$ denotes the infinite part.

Define
\begin{eqnarray*}
K_{\mm,1} := \{a \in K^{\times} \mid && v(a-1) \geq v(\mm) \textup{ for all finite } v \textup{ dividing } \mm, \textup{ and}\\
&& \textup{the image of } a \textup{ in } K_{v}^{\times} \textup{ is positive for all real } v \textup{ dividing } \mm\}.
\end{eqnarray*}
Then we have a well-defined map
\begin{eqnarray*}
i \colon K_{\mm,1} &\to & I^{S(\mm)}\\
a &\mapsto & (a)
\end{eqnarray*}
where $S(\mm)$ denotes the set of finite primes appearing in the modulus $\mm$. We also define the \textit{ray class group modulo $\mm$} by
$$C_{\mm} = I^{S(\mm)}/i(K_{\mm,1}).$$

\begin{example} If $K = \Q$, and $\mm = 2 \cdot \infty$, then the ray classes modulo $\mm$ give exactly the partition of $K_{\mm,1} = \Z_{(2)}^{\times}$ appearing in \cite{Koe10}, page 7. Namely, these classes are $k + 8\Z_{(2)}$ for $k = 1,3,5,7$.
\end{example}

For a finite abelian extension $L/K$ and a set $S$ of primes of $K$ containing all the primes ramifying in $L$, we also have the global Artin homomorphism
\begin{eqnarray*}
\psi_{L/K} \colon I^S &\to & \Gal(L/K)\\
\pp &\mapsto & (\pp, L/K)
\end{eqnarray*}
where $(\pp, L/K)$ denotes the Frobenius automorphism corresponding to the prime ideal $\pp$. This definition can be linearly extended to all of $I^S$.

\begin{example}
\label{Ex: rayclass}
For any number field $K$, let us consider the extension $L/K$, with $L = K(\sqrt a)$ for $a \in K^{\times}\backslash K^{{\times}2}$. Let $S$ to be the set of primes of $K$ ramifying in this extension. We identify $\Gal(L/K)$ with $\{\pm 1\}$. Let $\mm = 2a \cdot \infty$. To explicitly write down the Artin homomorphism with respect to $\mm$, we want to compute the Frobenius elements of the prime ideals $\pp$ of $K$ for $\pp$ coprime to $(2a)$. In this case, the Frobenius element $(\pp, L/K)$ is given by the power residue symbol $\left(\frac{a}{\pp}\right)$. For a more detailed discussion of the power residue symbol, see Exercise 1.5 of \cite{CasFro86}.

Then the Artin homomorphism is given explicitly by
\begin{eqnarray*}
\psi_{K/\Q} \colon I^S &\to & \Gal(K/\Q)\\
\pp &\mapsto & \left(\frac{a}{\pp}\right),
\end{eqnarray*}
\end{example}

Further, if we let $I_K$ and $I_L$ denote the groups of fractional ideals in $L$ and $K$ respectively, the relative norm map on the prime ideals $\mathfrak{P}$ of $L$ is defined as
\begin{eqnarray*}
\Nm_{L/K} \colon I_L &\to & I_K\\
\mathfrak{P} & \mapsto & \pp^{f(\mathfrak{P}/\pp)},
\end{eqnarray*}
and extended linearly, where $\mathfrak{P}$ lies above $\pp \subseteq \calO_K$.

In particular, the primes of $K$ that split in $L$ lie in $\Nm_{L/K}(I_L)$. Further, we say that a homomorphism $\phi: I^S \to G$ admits a modulus if there exists a modulus $\mm$ with $S(\mm) = S$ such that $\phi(i(K_{\mm,1})) = 1$.

\begin{thm}[Artin Reciprocity] Let $L$ be a finite abelian extension of $K$, and let $S$ be the set of primes of $K$ ramifying in $L$. Then the Artin map $\psi: I^S \to \Gal(L/K)$ admits a modulus $\mm$ with $S(\mm) = S,$ and it defines an isomorphism
$$I_K^{S}/i(K_{\mm,1}) \cdot \Nm(I_L^{S'}) \to \Gal(L/K),$$
where $S'$ denotes the set of primes of $L$ lying over a prime in $S(\mm)$.
\end{thm}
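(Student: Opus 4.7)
The plan is to prove this classical theorem of global class field theory by the standard ``two inequalities plus Chebotarev'' approach, first handling cyclic $L/K$ and then bootstrapping to the general abelian case.

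For cyclic $L/K$ of degree $n$ with $G = \Gal(L/K)$, the target is the equality $[I_K^S : i(K_{\mm,1}) \Nm(I_L^{S'})] = n$ for a suitable modulus $\mm$, which is established via two inequalities. The ``First Inequality'' ($\leq n$) follows from the cohomology of the idele class group $C_L$: one computes the Herbrand quotient of $C_L$ to be $n$, invokes ``Hilbert 90 for idele classes'' to get $H^1(G, C_L) = 0$, and concludes $|\hat{H}^0(G, C_L)| = n$; strong approximation then translates this bound on the norm index of idele classes into the required ideal-theoretic statement. The ``Second Inequality'' ($\geq n$) is proved analytically, via the nonvanishing of Hecke $L$-functions at $s = 1$ (equivalently, a Dirichlet density statement showing that Frobenius elements of primes hit every ray class), which forces the index to be at least $n$.

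Surjectivity of the Artin map $\psi$ then follows from the Chebotarev density theorem, since Frobenius elements at unramified primes are equidistributed in $G$. Combined with the equality of orders, this yields the desired isomorphism in the cyclic case. For general abelian $L/K$, one decomposes $G$ into cyclic factors, applies the cyclic result to each corresponding subextension, and patches the isomorphisms using the compatibility of the Artin map with restriction and corestriction.

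The main obstacle is establishing that $\psi$ admits a modulus at all, i.e., that it is trivial on $i(K_{\mm,1})$ for some $\mm$; this is the genuine content of ``reciprocity'' and is considerably deeper than the cardinality computations above. The standard route, via Brauer group or crossed-product constructions, reduces the problem to the cyclotomic case, where the Artin map has the explicit description $\pp \mapsto \bigl(\zeta \mapsto \zeta^{\Nm(\pp)}\bigr)$; local-global compatibility with the Lubin--Tate local reciprocity maps then propagates the conductor bound to general finite abelian extensions, at which point the isomorphism in the statement falls out.
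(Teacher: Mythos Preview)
The paper does not prove this theorem at all: it is stated as the classical Artin Reciprocity law, quoted as background from standard references (implicitly \cite{CasFro86}), and used without proof. So there is nothing in the paper to compare your proposal against.

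That said, your outline is a reasonable summary of the standard textbook development, with one caveat worth flagging. You invoke the Chebotarev density theorem to obtain surjectivity of the Artin map, but the usual proof of Chebotarev for general Galois extensions reduces to the abelian case \emph{via} Artin reciprocity, so citing it here is circular as stated. The standard non-circular route is to deduce surjectivity directly from the First Inequality: if the image of $\psi$ were a proper subgroup $H \subsetneq G$, then in the fixed field $L^H$ every unramified prime of $K$ would split completely, giving a set of primes of density $1$ that split in a nontrivial extension, contradicting the density bound $\leq 1/[L^H:K]$ furnished by the First Inequality. Once surjectivity is in hand, the equality of indices from the two inequalities yields the isomorphism, and only \emph{after} Artin reciprocity is established does one bootstrap to full Chebotarev.
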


Thus, only the ray classes containing the primes of $K$ that split in $L$ have trivial image under this isomorphism. 

\begin{remark}
For a quadratic extension $\Q(\sqrt m)/\Q$, $\mm = 4m \cdot \infty$ is an admissible modulus, so if $L = \Q(\sqrt{-1}, \sqrt 2)/\Q$, we can take $\mm = 8 \cdot \infty$. From Example \ref{Ex: rayclass}, we see that the splitting behaviour of a prime in $\Q(\sqrt{-1},\sqrt 2)/\Q$ depends on its ray class modulo $\mm$, characterized by $k + 8\Z_{(2)}$, for $k = 1,3,5,7$.
\end{remark}

\subsection{Prescribing Hilbert Symbols}

The main result of this section is Theorem \ref{tate}, which was communicated by Tate \cite{Tat11}, generalizing \cite{Ser73}, Theorem 4, page 24 to the case of any global field and any norm residue symbol.

Let $n > 1$ be an integer, and let $K$ be a global field containing the $n$-th roots of unity, where $\Char K \nmid n$. Let $J$ be the \idele group of $K$, $I_v = K_v^{\times}/K_v^{\times n}$, and $U_v$ denotes the image of units of $\calO_v \subseteq K_v$ in $I_v$. Also, let $I = \prod_{v} '(I_v, U_v/U_v^n) = J/J^n,$ where $\prod_{v}'$ denotes the restricted direct product. Let $P$ be the image of $K^{\times}$ in $I$. 

\begin{prop}
\label{generaltate}
Let $A$ be a finitely generated subgroup of $P$, and for each $v$, let $A_v$ be its image in $I_v$. Then a character of $\prod_v A_v$ which is trivial on $A$ can be extended to a character of $I$ which is trivial on $P$.
\end{prop}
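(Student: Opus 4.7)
The plan is to interpret the proposition via the global Hilbert pairing
$$\langle (x_v), (y_v) \rangle := \prod_v (x_v, y_v)_v \colon I \times I \to \mu_n,$$
which is well-defined because the local Hilbert symbol of any two ideles is trivial at almost all places. Two inputs from global class field theory will do the heavy lifting: each local pairing $I_v \times I_v \to \mu_n$ is non-degenerate (local Tate duality), and $P$ is its own annihilator $P = P^\perp$ inside $I$ (equivalent to Artin reciprocity combined with Kummer theory applied to the extension $K(\sqrt[n]{K^{\times}})/K$).

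The plan is to reduce the statement to the subgroup identity
$$A^\perp \,=\, P \cdot \prod_v A_v^\perp \quad \text{in } I,$$
where annihilators are taken under $\langle\,,\,\rangle$. Granting this identity, a character $\chi$ of $\prod_v A_v$ is, by local non-degeneracy, of the form $\langle -,(b_v)\rangle|_{\prod_v A_v}$ for some $(b_v) \in I$ with $b_v \in A_v^\perp$ for almost all $v$; triviality of $\chi$ on $A$ translates to $(b_v) \in A^\perp$; writing $(b_v) = b \cdot (u_v)$ with $b \in P$ and $u_v \in A_v^\perp$, the character $\langle -, b \rangle$ on $I$ is trivial on $P$ by Hilbert reciprocity and agrees with $\chi$ on $\prod_v A_v$, furnishing the desired extension. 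The inclusion $P \cdot \prod_v A_v^\perp \subseteq A^\perp$ is immediate from $A \subseteq P = P^\perp$ and bilinearity.

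For the reverse inclusion I would dualize: the statement is equivalent to the natural map $\prod_v A_v^\perp \to (P/A)^\vee$, $(u_v) \mapsto \bigl(p \mapsto \prod_v (p, u_v)_v\bigr)$, being surjective. Using $P = P^\perp$, every character $\psi$ of $P$ trivial on $A$ lifts to some $x \in I$, and the task becomes finding $q \in P$ such that $x_v \cdot q_v \in A_v^\perp$ for every $v$. This is a problem of prescribing local behavior of an element of $K^{\times}$ modulo $A_v^\perp$ at a finite set of places, subject to the single global compatibility $\psi|_A = 1$.

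The main obstacle is exactly this last step: producing a global element of $K^{\times}$ with prescribed image modulo $A_v^\perp$ at each of finitely many $v$. I expect this is where Tate's argument does its real work, presumably by combining weak approximation (or \v{C}ebotarev density) with the constraint imposed by $P = P^\perp$; an alternative would be to extract the conclusion from the exactness of a Poitou--Tate-style nine-term sequence for $\mu_n$ over the global field $K$, which packages precisely this type of local-to-global statement.
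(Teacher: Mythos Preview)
Your route is genuinely different from the paper's and, as you yourself flag, incomplete at the decisive point: manufacturing a global $q\in K^\times$ with prescribed image modulo $A_v^\perp$ at each of finitely many places. Invoking the Poitou--Tate nine-term sequence would close the gap, but that sequence already packages exactly the local-to-global input you are trying to establish, so the argument becomes uneconomical at best. Note also that your lifting step ``every character $\psi$ of $P$ trivial on $A$ lifts to some $x\in I$'' needs the full isomorphism $I/P\cong\Hom(P,\mu_n)$, not merely the inclusion $P\subseteq P^\perp$; this is the content of a separate lemma in the paper, proved after the proposition.

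The paper's argument is much shorter and sidesteps the Hilbert pairing entirely at this stage. Since every group in sight has exponent dividing $n$ and $\mu_n$ is an injective $\Z/n\Z$-module, surjectivity of the restriction map $\Hom(I/P,\mu_n)\to\Hom\bigl(\prod_v A_v/A,\mu_n\bigr)$ is \emph{equivalent} to injectivity of the natural map $\prod_v A_v/A\to I/P$, i.e., to the identity
\[
P\cap\prod_v A_v \;=\; A
\]
inside $I$. The nontrivial inclusion is then a two-line Kummer argument: if $\alpha\in K^\times$ has $\alpha_v\in A_v$ for every $v$, then $K(A^{1/n},\alpha^{1/n})/K(A^{1/n})$ splits at every place, hence is trivial, whence $\alpha\in A$. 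This identity is precisely the dual of your target $A^\perp=P\cdot\prod_v A_v^\perp$ under the pairing you set up; the paper simply proves the primal version directly instead of chasing the dual. The identification $I/P\cong\Hom(K^\times/K^{\times n},\mu_n)$ and the explicit realization of the extended character as $\langle\,-\,,b\rangle$ for a global $b$ enter only afterwards, when the proposition is combined with that lemma to deduce the Serre-style theorem you were effectively aiming at in one shot.
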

\begin{proof}
We need to show that the natural restriction map between groups of continuous homomorphisms $\Hom(I/P, \mu_n) \to \Hom(\prod_vA_v/A, \mu_n)$ is surjective. This is equivalent to showing the injectivity of $\prod_vA_v/A \to I/P$. For this, it suffices to show that $P \cap \prod_v A_v = A$. The right-to-left inclusion is clear by construction. To show the left-to-right inclusion, let $\alpha \in K^{\times}$ be an element such that if we view it as an element of $P$, then $\alpha_v \in A_v$ for all $v$. Then $K(A^{1/n}, \alpha^{1/n})$ is an extension of $K(A^{1/n})$ which splits at every place. Hence the two fields are equal, and by Kummer theory, this means $\alpha \in A$, as required.
\end{proof}

\begin{lemma}
\label{L: CFT}
For a global field $K$ containing $n$-th roots of unity with $\Char K \nmid n$, the homomorphism
\begin{align*}
I/P &\to \Hom(K^{\times}/K^{\times n}, \mu_n)\\
(b_v)_v &\mapsto (x \mapsto \prod_v(b_v,x)_v)
\end{align*}
is an isomorphism.
\end{lemma}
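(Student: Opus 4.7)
The plan is to identify $\phi$ with the composition of three standard isomorphisms coming from class field theory and Kummer theory, and then verify (via a place-by-place computation) that the composition matches the explicit Hilbert-symbol formula. First I will check that $\phi$ is well-defined: for $(b_v)_v \in I$ and $x \in K^\times$, both $b_v$ and $x$ are units at all but finitely many $v$, and at a finite place $v \nmid n$ where this happens the tame formula (\ref{E: serre}) yields $(b_v, x)_v = 1$, so the product is finite; the fact that $\phi$ vanishes on $P$ is exactly the global Hilbert reciprocity law $\prod_v (c, x)_v = 1$ for $c, x \in K^\times$.

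Let $L := K(\{y^{1/n} : y \in K^\times\})$ denote the maximal exponent-$n$ Kummer extension of $K$, which is abelian over $K$ since $\mu_n \subset K$. I will factor $\phi$ as
$$ I/P = J/(K^\times J^n) = C_K/C_K^n \xrightarrow{\psi_{L/K}} \Gal(L/K) \xrightarrow{\sim} \Hom(K^\times/K^{\times n}, \mu_n), $$
where $C_K = J/K^\times$ denotes the idele class group. The last arrow is the Kummer perfect pairing $\sigma \mapsto (x \mapsto \sigma(x^{1/n})/x^{1/n})$, which is a standard isomorphism. The middle arrow is global Artin reciprocity for $L/K$; its kernel, the norm group $\Nm_{L/K}(C_L)$, equals $C_K^n$ by the CFT/Kummer dictionary (open finite-index subgroups of $C_K$ containing $C_K^n$ correspond bijectively to exponent-$n$ subextensions of $L$, and $L$ itself must correspond to the smallest such subgroup, namely $C_K^n$), so this middle arrow is indeed an isomorphism.

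It remains to verify that the composition coincides with $(b_v)_v \mapsto (x \mapsto \prod_v (b_v, x)_v)$. This reduces, place by place, to the classical local compatibility: for $b_v \in K_v^\times$ the local Artin image $\psi_{L_w/K_v}(b_v) \in \Gal(L_w/K_v)$, evaluated against $x \in K^\times$ via the Kummer pairing, yields exactly the local Hilbert symbol $(b_v, x)_v$ (see e.g.\ \cite{Ser79}, XIV.2). Multiplying over $v$ produces the global product formula for $\phi$. I expect the main obstacle to be this explicit local compatibility together with the identification $\Nm_{L/K}(C_L) = C_K^n$; both are classical but technical, and no ingredients beyond global Artin reciprocity, Kummer theory, and the tame formula (\ref{E: serre}) should be needed.
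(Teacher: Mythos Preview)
Your approach is essentially the same as the paper's: both factor the map through the global Artin isomorphism $C_K/C_K^n \cong \Gal(K^{\mathrm{ab}}/K)^{\exp n}$ followed by the Kummer perfect pairing. Two minor points of difference are worth noting. First, you explicitly verify (via the local compatibility of the Artin map with the Hilbert symbol) that the composite isomorphism is given by the formula $(b_v)_v \mapsto (x \mapsto \prod_v (b_v,x)_v)$; the paper constructs an isomorphism between the same groups but does not check that it agrees with the stated map, so your proof is actually more complete on this point. Second, your justification that the Artin map induces an isomorphism $C_K/C_K^n \to \Gal(L/K)$ is phrased via ``the norm group $\Nm_{L/K}(C_L)$ equals $C_K^n$,'' which is imprecise because $L/K$ is infinite: what you really need is that the intersection of the norm groups of all finite exponent-$n$ subextensions is exactly $C_K^n$, i.e.\ that $C_K^n$ is closed in $C_K$. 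The paper handles this instead by analyzing the kernel of the Artin map directly (the connected component of $1$ in the number-field case, the profinite completion in the function-field case) and observing that this kernel is contained in $C_K^n$; you could either adopt that argument or supply the short compactness argument showing $C_K^n$ is closed.
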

\begin{proof}
Let $C_K$ denote the \idele class group of $K$. Then $C_K = J/K^{\times}$, and $I/P = C_K/C_K^n$. 

First assume that $K$ is a number field. Using class field theory, there is a surjective Artin homomorphism
$$\psi_K \colon C_K \to \Gal(K^{\textup{ab}}/K),$$
which gives an isomorphism
$$C_K/\ker(\psi_K) \cong \Gal(K^{\textup{ab}}/K).$$
Note that $\ker{\psi_K}$ can be described as the connected component of $1$. Equivalently, this is the image in $C_K$ of the product over the archimedean primes $v$ of $K_v^+$, which is the connected component of $1$ in $K_v$. This means $K_v^+ = \C^{\times}$ or $K_v^+ = \R_{>0}$, depending on whether $v$ is a real or a complex place. Since $C_K^n$ contains $\ker{\psi_K}$, taking the quotient of the Artin homomorphism by $C_K^n$ gives the isomorphism
$$C_K/C_K^n \cong \Gal(K^{\textup{ab}}/K)/n \Gal(K^{\textup{ab}}/K) = \Gal(K^{\textup{ab}}/K)^{\textup{exp } n}.$$

Now suppose that $K$ is a global function field. In this case, we have an Artin homomorphism $\psi_K \colon C_K \to \Gal(K^{\textup{ab}}/K)$, which induces an isomorphism
$$\hat{\psi}_K: \hat{C}_K \to \Gal(K^{\textup{ab}}/K),$$
where $\hat{C}_K$ denotes the profinite completion of the group $C_K$. Using the argument from the above paragraph, we get the isomorphism
$$C_K/C_K^n \cong \hat{C}_K/\hat{C}_K^n \cong \Gal(K^{\textup{ab}}/K)^{\exp n}.$$

Then for any global field $K$, by Kummer theory, there is a perfect pairing
$$K^{\times}/K^{\times n} \times \Gal(K^{\textup{ab}}/K)^{\textup{exp } n} \to  \mu_n.$$

This gives the desired isomorphism
$$\Gal(K^{\textup{ab}}/K)^{\textup{exp } n} \cong \Hom(K^{\times}/K^{\times n}, \mu_n).$$
\end{proof}

Proposition \ref{generaltate} implies a statement analogous to \cite{Ser73}, Theorem 4, page 24:

\begin{thm}
\label{tate}
Let $K$ be a global field. Let $V$ be the set of places of $K$, and let $\Lambda$ be a finite set of indices. Let $(a_i)_{i \in \Lambda}$ be a finite family of elements  in $K^{\times}$ and let $(\vareps_{i,v})_{i \in \Lambda, v \in V}$ be a family of numbers equal to $\pm 1$. In order that there exists $x \in K^{\times}$ such that $(a_i,x)_v = \vareps_{i,v}$ for all $i \in \Lambda$ and $v \in V$, it is necessary and sufficient that the following conditions be satisfied:
\begin{itemize}
\item[(1)] All but finitely many of the $\vareps_{i,v}$ are equal to $1$.
\item[(2)] For all $i \in \Lambda$, we have $\prod_{v \in V} \vareps_{i,v} = 1$.
\item[(3)] For all $v \in V$, there exists $x_v \in K^{\times}$ such that $(a_i,x_v)_v = \vareps_{i,v}$ for all $i \in \Lambda$.
\end{itemize}
\end{thm}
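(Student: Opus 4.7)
The plan is to deduce the theorem directly from Proposition \ref{generaltate} together with Lemma \ref{L: CFT}. Necessity of the three conditions is standard: (1) holds because the norm residue symbol of two units is trivial at any place $v \nmid n$, so only the finitely many bad places and the finitely many $v$ at which some $a_i$ or $x$ fails to be a unit contribute; (2) is global reciprocity for the symbol $( \cdot , \cdot )_v$; (3) follows by taking $x_v = x$ for each $v$.

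For sufficiency, let $A \subseteq P$ be the subgroup generated by (the classes of) the $a_i$, and let $A_v$ denote its image in $I_v = K_v^\times / K_v^{\times n}$. By condition (3) choose for each $v$ an element $x_v \in K_v^\times$ with $(a_i, x_v)_v = \vareps_{i,v}$ for every $i$; using condition (1) and the triviality of the symbol at good places we may arrange $x_v = 1$ for all but finitely many $v$. Define $\chi_v \colon A_v \to \mu_n$ by $\chi_v(b) = (b, x_v)_v$, which descends from $K_v^\times$ to $I_v$ by the standard properties of the Hilbert symbol. Since $\chi_v \equiv 1$ at cofinitely many places, the place-wise product
\begin{equation*}
\chi \colon \prod_v A_v \to \mu_n, \qquad (b_v)_v \mapsto \prod_v \chi_v(b_v)
\end{equation*}
is a well-defined continuous character.

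The crucial algebraic check is that $\chi$ vanishes on the diagonal subgroup $A$: on the generator $a_i$ one computes $\chi(a_i) = \prod_v (a_i, x_v)_v = \prod_v \vareps_{i,v} = 1$ by condition (2), and multiplicativity of the Hilbert symbol extends this to all of $A$. Proposition \ref{generaltate} then extends $\chi$ to a character $\tilde\chi$ of $I$ that is trivial on $P$, i.e., a character of $I/P$. Lemma \ref{L: CFT} realizes the Hilbert pairing $I/P \times K^\times/K^{\times n} \to \mu_n$ as a perfect pairing identifying $I/P$ with the Pontryagin dual of the discrete group $K^\times/K^{\times n}$; consequently $I/P$ is compact and its own dual recovers $K^\times/K^{\times n}$, producing an $x \in K^\times$ (unique modulo $K^{\times n}$) such that $\tilde\chi((b_v)) = \prod_v (b_v, x)_v$ for every $(b_v) \in I$.

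To conclude, evaluate $\tilde\chi$ at the id\`ele $a_i^{(v)}$ which equals $a_i$ at the place $v$ and $1$ elsewhere: the extension property gives $\tilde\chi(a_i^{(v)}) = \chi_v(a_i) = \vareps_{i,v}$, while the representation of $\tilde\chi$ by $x$ gives $\tilde\chi(a_i^{(v)}) = (a_i, x)_v$. Equating these for all $i$ and $v$ yields the required element. I expect the main obstacle to be the final dualization step: Lemma \ref{L: CFT} identifies $I/P$ with a dual, but recovering an element of $K^\times$ from a character of $I/P$ requires invoking Pontryagin double-duality and keeping careful track of the topologies (discreteness of $K^\times/K^{\times n}$, compactness of $I/P$). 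A secondary bookkeeping point is the selection of $x_v$ at the finitely many bad places, ensuring that the restricted-product structure on $I$ is respected.
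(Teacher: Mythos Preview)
Your proposal is correct and follows essentially the same route as the paper: build a character on $\prod_v A_v$ from the local data, check triviality on $A$ via condition (2), extend by Proposition \ref{generaltate}, and then invoke Lemma \ref{L: CFT} to produce the global $x$. The only cosmetic difference is that the paper defines $\vareps_v$ directly by $a_i \mapsto \vareps_{i,v}$ and appeals to (3) for well-definedness, whereas you realize $\chi_v$ as $(\,\cdot\,,x_v)_v$; these are the same character. Your worry about the dualization step is apt---Lemma \ref{L: CFT} literally gives $I/P \cong \Hom(K^\times/K^{\times n},\mu_n)$, and passing to a character on $I/P$ does require one more application of Pontryagin duality for abelian groups of exponent $n$---but the paper glosses over this in exactly the same way, so you are no less rigorous than the original.
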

\begin{proof}
Let $A$ be the group generated by the images of the $a_i$ in $I$, and let $A_v$ be the image of $A$ in $I_v$ for each $v$. Let $\vareps_v$ be a character on $A_v$ defined by
\begin{eqnarray*}
\vareps_v \colon A_v &\to & \mu_n\\
a_i &\mapsto & \vareps_{i,v}.
\end{eqnarray*}
By (3), $\vareps_v$ is indeed a character. Then define
\begin{align*}
\chi \colon \prod_{v}A_v &\to \mu_n\\
(a_v)_v &\mapsto \prod_v \vareps_v(a_v) 
\end{align*}

 Since almost all $\vareps_{i,v}$ are $1$ by (1), this gives a well-defined character on $\prod_{v}A_v$. By (2), $\chi$ is trivial on $A$.

Then by Proposition \ref{generaltate},  $\chi$ can be extended to a character $\widetilde{\chi}: I \to \mu_n$ that is trivial on $P$. By Lemma \ref{L: CFT}, this corresponds naturally to an element $x$ of $K^{\times}/K^{\times n}$, so that we can write $\widetilde{\chi}((b_v)_v) = \prod_v(b_v,x)_v$. This gives the conclusion of the theorem, since now we can uniformly write
$$\vareps_{i,v} = \vareps_v(a_i) = (a_i,x)_v$$
for this $x$.
\end{proof}


\subsection{Uniform definition of the ring of integers as intersection of localization rings}

Let $a,b$ be totally positive elements of $K^{\times}$ whose images in $K^{\times}/K^{{\times}2}$ are independent. Then we have in particular that $\Gal(K(\sqrt a, \sqrt b)/K) = \{\pm 1\} \times \{\pm 1\}$, and further, $\sqrt{ab} \notin K$. We would like to see how primes split in the extensions $K(\sqrt{a})/K$ and $K(\sqrt b)/K$ (and hence in $K(\sqrt{a}, \sqrt b)/K)$. This will give us some information about the Hilbert symbols $(a,p)_{\pp}$ and $ (b,p)_{\pp}$. More precisely, let
$$\psi: C_{\mm} \to \Gal(K(\sqrt a, \sqrt b)/K) = \{\pm 1\} \times \{\pm 1\}$$
be the Artin map. Then:

\begin{lemma}
\label{L: artin}
Take $a,b \in K^{\times}$ as above, and let $p \in K^{\times}$. Let $\mm$ be an admissible modulus, corresponding to the extension $K(\sqrt a, \sqrt b)/K$. Further, suppose that $\mm$ is divisible by all primes dividing $2ab$, and we also assume that $\mm$ contains all real places. For a prime $\pp$ in $K$ such that $\pp \nmid \mm_0$, $\pp \in \Delta_{a,p} \cap \Delta_{b,p}$ if and only if $v_{\pp}(p)$ is odd and $\psi(\pp) = (-1,-1)$.

\end{lemma}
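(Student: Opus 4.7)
The plan is to translate both sides of the claimed equivalence into a statement about the squareness of $\bar a$ and $\bar b$ in the residue field $\F_\pp$, and then observe that these two translations coincide.

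First I translate the condition $\pp \in \Delta_{a,p} \cap \Delta_{b,p}$. Since $\mm_0$ is divisible by every finite prime dividing $2ab$ and $\pp \nmid \mm_0$, the prime $\pp$ lies above an odd rational prime and both $a, b$ are $\pp$-adic units. The explicit Hilbert symbol formula \eqref{E: serre} therefore applies to $(a,p)_\pp$, and since $v_\pp(a) = 0$ it collapses to $(a,p)_\pp = \red_\pp(a)^{v_\pp(p)(q-1)/2}$. By the consequence of \eqref{E: serre} recorded immediately beneath it, this equals $-1$ precisely when $v_\pp(p)$ is odd and $\bar a$ is a non-square in $\F_\pp$; the analogous statement holds with $b$ in place of $a$. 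Hence $\pp \in \Delta_{a,p} \cap \Delta_{b,p}$ if and only if $v_\pp(p)$ is odd and neither $\bar a$ nor $\bar b$ is a square in $\F_\pp$.

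Next I translate the condition $\psi(\pp) = (-1,-1)$. Admissibility of $\mm$ for $K(\sqrt a, \sqrt b)/K$ together with $\pp \nmid \mm_0$ ensures that $\pp$ is unramified in $K(\sqrt a, \sqrt b)/K$, so $\psi(\pp)$ is defined. Under the identification $\Gal(K(\sqrt a, \sqrt b)/K) \cong \{\pm 1\} \times \{\pm 1\}$ coming from the two quadratic subextensions, the first coordinate of $\psi(\pp)$ is the image of $\Frob_\pp$ in $\Gal(K(\sqrt a)/K)$, which equals $+1$ iff $\pp$ splits in $K(\sqrt a)/K$; by the Hensel's lemma observation displayed after \eqref{E: serre}, this splitting happens iff $\bar a$ is a square in $\F_\pp$, and similarly for the second coordinate with $b$. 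Therefore $\psi(\pp) = (-1,-1)$ if and only if neither $\bar a$ nor $\bar b$ is a square in $\F_\pp$.

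Combining the two characterizations yields the lemma. There is no substantive obstacle here beyond bookkeeping: one must only check that the divisibility hypotheses on $\mm$ are exactly what is required to make each translation go through — divisibility of $\mm_0$ by primes above $2$ is needed to invoke \eqref{E: serre}, divisibility by primes dividing $ab$ guarantees that $a$ and $b$ are $\pp$-adic units, and admissibility of $\mm$ for $K(\sqrt a, \sqrt b)/K$ is what places $\pp$ in the domain of the Artin map $\psi$.
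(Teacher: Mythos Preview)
Your proof is correct and follows essentially the same approach as the paper: both use the explicit Hilbert symbol formula \eqref{E: serre} (available because $\pp \nmid 2ab$) to reduce $(a,p)_\pp = (b,p)_\pp = -1$ to the parity of $v_\pp(p)$ together with the non-squareness of $\bar a, \bar b$ in $\F_\pp$, and then identify the latter with the condition $\psi(\pp) = (-1,-1)$ via the power residue description of the Artin map. Your write-up is simply a more detailed unpacking of the paper's one-paragraph argument.
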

\begin{proof}
For any prime ideal $\pp$ of $K$ prime to the modulus $\mm$, $a$ is a $\pp$-adic unit, so we use Equation \ref{E: serre} to compute $(a,p)_{\pp}$. That is, $(a,p)_{\pp} = -1$ if and only if $v_{\pp}(p)$ is odd and $a$ is not a square in the residue field modulo $\pp$. But we have observed that these conditions are equivalent to insisting that $\psi(\pp) = (-1,\pm 1)$. A similar argument applies to $(b,p)_{\pp}$, and we get our conclusion.
\end{proof}

Let us partition the primes of $K$ by their images under $\psi$:
$$\PP^{[i,j]} = \{\textup{prime ideals } \pp \textup{ of } K \mid \psi(\pp) = (i,j)\},$$
where $(i,j) \in \Gal(\Q(\sqrt a, \sqrt b)/\Q)$, with $i,j \in \{\pm 1\}$.
We also let
$$\PP^{[i,j]}(p) = \{\textup{primes } \pp \in \PP^{[i,j]} \textup{ with } v_{\pp}(p) \textup{ odd}\}.$$

As long as the image of $(p)$ under the Artin map is nontrivial, the sets $\PP^{[i,j]}(p)$ have a simple description using Hilbert symbols:

\begin{lemma} 
\label{L: identification}
Suppose that $p \in K^{\times}$, and for the fixed modulus $\mm$ from above, suppose that the fractional ideal $(p)$ has no common factors with $\mm$. Then we have the following identification of sets of primes, where the two sets differ at most by the primes dividing the modulus.
\begin{eqnarray*}
\PP^{[-1,-1]}(p) &\leftrightarrow & \Delta_{a,p} \cap \Delta_{b,p}\\
\PP^{[-1,1]}(p) &\leftrightarrow & \Delta_{a,p} \cap \Delta_{ab,p}\\
\PP^{[1,-1]}(p) &\leftrightarrow & \Delta_{b,p} \cap \Delta_{ab,p},
\end{eqnarray*}
\end{lemma}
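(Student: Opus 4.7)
The first identification $\PP^{[-1,-1]}(p) \leftrightarrow \Delta_{a,p} \cap \Delta_{b,p}$ is essentially a direct restatement of Lemma~\ref{L: artin}: that lemma asserts that for $\pp \nmid \mm_0$, the condition $\pp \in \Delta_{a,p} \cap \Delta_{b,p}$ is equivalent to $v_\pp(p)$ being odd together with $\psi(\pp) = (-1,-1)$, which is exactly the condition defining $\PP^{[-1,-1]}(p)$. So the two sets agree outside the finitely many primes dividing $\mm$.

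For the remaining two identifications, the plan is to apply Lemma~\ref{L: artin} to the alternative pairs $(a, ab)$ and $(b, ab)$, and then translate the conclusions back to the $(a,b)$-labeling of $\Gal(K(\sqrt a, \sqrt b)/K)$. First I would verify the hypotheses: since $a$ and $b$ are totally positive, so is $ab$; and since $a, b$ have independent images in $K^\times/K^{\times 2}$ (so in particular $ab$ is not a square), a short check shows the pairs $a, ab$ and $b, ab$ are also independent mod squares. Moreover $K(\sqrt a, \sqrt{ab}) = K(\sqrt b, \sqrt{ab}) = K(\sqrt a, \sqrt b)$, and the requirement that $\mm$ be divisible by all primes dividing $2a \cdot ab$ (resp.\ $2b \cdot ab$) reduces to the hypothesis that $\mm$ be divisible by all primes dividing $2ab$. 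Hence the same admissible modulus $\mm$ works for all three pairs.

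The key step is the Galois-group bookkeeping. If $\sigma \in \Gal(K(\sqrt a, \sqrt b)/K)$ acts by $\sqrt a \mapsto i\sqrt a$ and $\sqrt b \mapsto j\sqrt b$ with $i,j \in \{\pm 1\}$, then automatically $\sigma(\sqrt{ab}) = ij\sqrt{ab}$. Hence in the $(a, ab)$-labeling, $\sigma$ corresponds to the pair $(i, ij)$, and the value $(-1,-1)$ there picks out $(i,j) = (-1,+1)$ in the original labeling. Applying Lemma~\ref{L: artin} to $(a, ab)$ therefore identifies $\Delta_{a,p} \cap \Delta_{ab,p}$ (up to primes dividing $\mm$) with $\PP^{[-1,1]}(p)$. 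An analogous calculation for $(b, ab)$ yields $\Delta_{b,p} \cap \Delta_{ab,p} \leftrightarrow \PP^{[1,-1]}(p)$.

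In all three cases, the phrase ``differ at most by the primes dividing the modulus'' in the statement reflects precisely the $\pp \nmid \mm_0$ hypothesis from Lemma~\ref{L: artin}; since $\mm$ is fixed and depends only on $a, b$ (and we have assumed $(p)$ is coprime to $\mm$), this finite exceptional set is harmless. I do not anticipate a serious obstacle: once the three applications of Lemma~\ref{L: artin} are lined up, the argument is bookkeeping, whose only nontrivial ingredient is the multiplicativity relation $\sigma(\sqrt{ab}) = \sigma(\sqrt a)\sigma(\sqrt b)$ used to reindex the Galois group.
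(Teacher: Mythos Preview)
Your proposal is correct and follows essentially the same approach as the paper: the first identification is immediate from Lemma~\ref{L: artin}, and the other two are obtained by reapplying that lemma to the pairs $(a,ab)$ and $(b,ab)$ together with the Galois reindexing $(\sigma_1,\sigma_2)\mapsto(\sigma_1,\sigma_1\sigma_2)$ (resp.\ $(\sigma_1\sigma_2,\sigma_2)$). Your write-up is in fact slightly more careful than the paper's in checking that the hypotheses of Lemma~\ref{L: artin} (total positivity, independence mod squares, and the same admissible modulus) hold for the new pairs.
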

\begin{proof}
$\PP^{[-1,-1]}(p)$ is easy: By Lemma \ref{L: artin}, it is $\Delta_{a,p} \cap \Delta_{b,p}$, excluding the primes not dividing the modulus. To prove the second (resp. third) equivalence, we express $\PP^{[1,-1]}(p)$ (resp. $\PP^{[-1,1]}(p)$) in a similar way, via the following identification of the Galois groups:
\begin{eqnarray*}
\Gal(K(\sqrt a, \sqrt b)/K) &\cong & \Gal(K(\sqrt {ab}, \sqrt b)/K) (\textup{resp. } \Gal(K(\sqrt{a}, \sqrt{ab})/K))\\
(\sigma_1, \sigma_2) &\mapsto & (\sigma_1 \sigma_2, \sigma_2) (\textup{resp. }(\sigma_1, \sigma_2) \mapsto  (\sigma_1, \sigma_1 \sigma_2))
\end{eqnarray*}
Composing the original Artin map $\psi$ with this isomorphism, we can draw similar conclusions in these cases as in the case of $\PP^{[-1,-1]}(p)$.
\end{proof}




\begin{defn}
\label{D: Rp}
For each $p,q \in K^{\times}$, let
\begin{eqnarray*}
R_p^{[-1,-1]} &=& \bigcap_{\pp \in \Delta_{a,p} \cap \Delta_{b,p}} (\calO_K)_{\pp}\\
R_p^{[1,-1]} &=& \bigcap_{\pp \in \Delta_{ab,p} \cap \Delta_{b,p}} (\calO_K)_{\pp}\\
R_p^{[-1,1]} &=& \bigcap_{\pp \in \Delta_{a,p} \cap \Delta_{ab,p}} (\calO_K)_{\pp}\\
R_{p,q}^{[1,1]} &=& \bigcap_{\pp \in \Delta_{ap,q} \cap \Delta_{bp,q}} (\calO_K)_{\pp}
\end{eqnarray*}
\end{defn}


The $R$'s are existentially defined subrings of $K$ containing $\calO_K$ as long as no archimedean places are involved in the intersection, since for any $a,b,c,d \in K^{\times}$ such that $\sigma \notin \Delta_{a,b} \cap \Delta_{c,d}$ for each archimedean place $\sigma$,
$$T_{a,b}+T_{c,d} = \bigcap_{\pp \in \Delta_{a,b} \cap \Delta_{c,d}} (\calO_K)_{\pp},$$
by Proposition \ref{P: Tab}.

We would now like to express $\calO_K$ in terms of the $R$'s, through the following lemmas:

\begin{lemma}
\label{L: classnumber1}
Let $\pp$ be a prime ideal of $\calO_K$ with $\pp \nmid \mm_0$, and suppose that $\psi(\pp) = (i,j)$ for $(i,j) \neq (1,1)$. Then $\pp \in \PP^{[i,j]}(p)$ for some $p \in K^{\times}$. Hence, there exist $c,d \in K^{\times}$ such that $\pp \in \Delta_{c,p} \cap \Delta_{d,p}$.
\end{lemma}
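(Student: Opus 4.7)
The plan is essentially to use weak approximation to produce a suitable $p$, then quote Lemma~\ref{L: identification} to finish.

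\medskip

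\textbf{Step 1 (Choice of $p$).} By hypothesis $\pp \nmid \mm_0$, so by weak approximation I choose $p \in K^{\times}$ satisfying $v_{\pp}(p) = 1$ and $v_{\qq}(p) = 0$ for every finite prime $\qq \mid \mm_0$. In particular, the fractional ideal $(p)$ has no common factor with $\mm$, which is the hypothesis needed to apply Lemma~\ref{L: identification}.

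\medskip

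\textbf{Step 2 (Membership in $\PP^{[i,j]}(p)$).} By the given hypothesis $\psi(\pp) = (i,j)$ we have $\pp \in \PP^{[i,j]}$, and by Step~1 the valuation $v_{\pp}(p) = 1$ is odd, so by the definition of $\PP^{[i,j]}(p)$ we conclude $\pp \in \PP^{[i,j]}(p)$. This proves the first assertion of the lemma.

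\medskip

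\textbf{Step 3 (Translation into $\Delta$'s).} Since $(i,j) \neq (1,1)$, Lemma~\ref{L: identification} applies to the set $\PP^{[i,j]}(p)$, and it identifies this set (away from primes dividing $\mm$) with an intersection of two $\Delta$'s. Since $\pp$ is coprime to $\mm$, the identification holds at $\pp$ itself, and reading off the right-hand side in each of the three possible cases supplies the required $(c,d)$: take $(c,d) = (a,b)$ when $(i,j) = (-1,-1)$, $(c,d) = (a, ab)$ when $(i,j) = (-1, 1)$, and $(c,d) = (b, ab)$ when $(i,j) = (1,-1)$.

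\medskip

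The only genuine point of care is ensuring that the auxiliary prime cofactors of the element $p$ we produce do not collide with primes of the modulus $\mm$, which is a routine application of weak approximation; once that is done, Lemma~\ref{L: identification} immediately converts the Artin-theoretic statement into the statement about $\Delta_{c,p} \cap \Delta_{d,p}$. The excluded case $(i,j) = (1,1)$ corresponds precisely to the situation where the Artin map gives no Hilbert-symbol obstruction, which is exactly why it must be handled separately (in the definition of $R_{p,q}^{[1,1]}$, using an auxiliary parameter $q$).
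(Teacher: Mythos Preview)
Your proposal is correct and follows essentially the same approach as the paper: choose $p$ with $v_{\pp}(p)$ odd (the paper simply takes $p \in \pp - \pp^2$). Your additional use of weak approximation to make $(p)$ coprime to $\mm_0$ is harmless extra care ensuring the hypothesis of Lemma~\ref{L: identification} holds for the ``Hence'' clause, which the paper's one-line proof leaves implicit.
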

\begin{proof}
Choosing $p \in \pp - \pp^2$ will suffice, since we will then have $v_{\pp}(p) = 1$.
\end{proof}

\begin{lemma}
\label{L: classnumber2}
For all prime ideals $\pp$ with $\pp \nmid \mm_0$ and satisfying $\psi(\pp) = (1,1)$, we have $\pp \in \Delta_{ap,q} \cap \Delta_{bp,q}$ for some $p,q \in K^{\times}$ such that $q$ is totally positive.
\end{lemma}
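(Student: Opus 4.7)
The plan is to use the explicit tame Hilbert symbol formula \eqref{E: serre} to choose $p$ and $q$ so that both symbols $(ap,q)_\pp$ and $(bp,q)_\pp$ equal $-1$. Since $\mm_0$ is divisible by all primes dividing $2ab$, the prime $\pp$ has odd residue characteristic and $a,b$ are $\pp$-adic units; moreover, the hypothesis $\psi(\pp)=(1,1)$ means that $\pp$ splits in $K(\sqrt a,\sqrt b)/K$, so both $\bar a$ and $\bar b$ are squares in $\F_\pp^\times$. This is precisely why the naive symbols $(a,p)_\pp$ and $(b,p)_\pp$ are trivial for any $p$ of odd $\pp$-valuation, and why one has to introduce a nontrivial second parameter $q$.

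First I would fix any $p \in \pp\setminus\pp^2$, so $v_\pp(p)=1$ and hence $v_\pp(ap)=v_\pp(bp)=1$. Then for any $q\in K^\times$ with $v_\pp(q)=1$, formula \eqref{E: serre} yields
\[
(ap,q)_\pp \;=\; \bigl(-\red_\pp(ap/q)\bigr)^{(\#\F_\pp-1)/2},
\]
and likewise for $(bp,q)_\pp$. Writing $u := q/p$, a $\pp$-adic unit, one has $\red_\pp(ap/q) = \bar a\,\bar u^{-1}$; because $\bar a$ is a square, the symbol is $-1$ iff $-\bar u$ is a non-square in $\F_\pp^\times$. Crucially, since $\bar b$ is \emph{also} a square, the identical condition on $u$ simultaneously forces $(bp,q)_\pp=-1$. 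This is the key observation that makes the $(1,1)$ case work: both symbols are flipped by the same single choice of $q$.

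The remaining task is to produce $q \in K^\times$ satisfying (i) $v_\pp(q)=1$, (ii) $-\overline{q/p}$ is a non-square in $\F_\pp^\times$, and (iii) $\sigma(q)>0$ for every real place $\sigma$ of $K$. Since $\#\F_\pp\geq 3$, there exists $u_0 \in \calO_K$ whose reduction $\bar u_0 \in \F_\pp^\times$ satisfies $-\bar u_0 \notin (\F_\pp^\times)^2$. By weak approximation at the finite set of places $\{\pp\}\cup\{\sigma : \sigma \text{ real archimedean}\}$, I pick $q\in K^\times$ with $q \equiv p u_0 \pmod{\pp^2}$ (which forces both (i) and (ii)) and with $\sigma(q)>0$ for every real $\sigma$. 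Then $(ap,q)_\pp=(bp,q)_\pp=-1$, so $\pp\in\Delta_{ap,q}\cap\Delta_{bp,q}$, as required.

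The main obstacle is conceptual rather than technical: one must recognize that in the $(1,1)$ case the Hilbert symbols at $\pp$ depend on $a$ and $b$ only through their square classes modulo $\pp$, so that a single twist by $q$ controls both quaternion algebras at once. Once this is isolated, the remaining compatibility with total positivity is immediate from weak approximation, since the archimedean conditions and the $\pp$-adic conditions are imposed at disjoint places.
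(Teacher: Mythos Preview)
Your proof is correct and follows essentially the same approach as the paper: choose $p\in\pp\setminus\pp^2$, then use the tame Hilbert symbol formula together with weak approximation (at $\pp$ and the real places) to produce a totally positive $q$ making both symbols $-1$. The only difference is a cosmetic one in the choice of $q$: the paper takes $v_\pp(q)=0$ with $\bar q$ a non-square in $\F_\pp^\times$, which makes the symbol condition slightly cleaner (no $(-1)$ factor, no dependence on $p$), whereas you take $v_\pp(q)=1$ and impose a condition on $-\overline{q/p}$; both are equally valid.
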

\begin{proof}
We can use the arguments exactly as in the proof of Lemma \ref{L: classnumber1} to find $p$ with $v_{\pp}(p)$ odd. Now, by the definition of $\PP(p,q)$, we only need to find a totally positive $q$ such that $v_{\pp}(q)$ is even (for example, 0), and $q$ not a square in $\F_{\pp}$. This can be achieved by weak approximation.
\end{proof}

\begin{cor} 
\label{C: integrality}
We have
$$\calO_K = \bigcap_{\pp|\mm_0}(\calO_K)_{\pp} \cap \bigcap_{p,q \in (K^{\times})^+} (R_p^{[1,1]} \cap R_p^{[1,-1]} \cap R_p^{[-1,1]} \cap R_{p,q}^{[1,1]})$$
where $(K^{\times})^+$ denotes the set of totally positive elements of $K$.
\end{cor}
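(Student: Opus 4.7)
The plan is to prove the equality by double inclusion. The forward direction $\calO_K \subseteq$ RHS is immediate: every factor on the right-hand side is an intersection of localizations $(\calO_K)_{\mathfrak{q}}$ at primes of $K$, and $\calO_K$ embeds into each such localization. For the reverse inclusion, fix $t$ in the right-hand intersection; the goal is to prove $v_{\pp}(t) \ge 0$ for every prime $\pp$ of $\calO_K$.

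Primes $\pp \mid \mm_0$ are dispatched by the explicit first factor. For $\pp \nmid \mm_0$, set $(i,j) := \psi(\pp) \in \Gal(K(\sqrt{a},\sqrt{b})/K) = \{\pm 1\}^2$ and split into four cases. When $(i,j) \neq (1,1)$, I would apply Lemma \ref{L: classnumber1}, refined by weak approximation, to produce a totally positive $p \in K^{\times}$ with $v_{\pp}(p) = 1$; Lemma \ref{L: identification} then places $\pp$ inside the Delta-intersection defining the relevant $R_p^{[i,j]}$ (namely $\Delta_{a,p} \cap \Delta_{b,p}$ for $(-1,-1)$, $\Delta_{a,p} \cap \Delta_{ab,p}$ for $(-1,1)$, and $\Delta_{b,p} \cap \Delta_{ab,p}$ for $(1,-1)$), whence $t \in R_p^{[i,j]} \subseteq (\calO_K)_{\pp}$. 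Total positivity of $a, b, p$ guarantees that no archimedean place appears in any of these Delta sets, so each $R_p^{[i,j]}$ is genuinely the existentially defined subring from Proposition \ref{P: Tab}.

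The remaining case $\psi(\pp) = (1,1)$ is the main obstacle, since it requires both parameters $p$ and $q$ and demands that the Hilbert symbols of $q$ against both $ap$ and $bp$ simultaneously equal $-1$. Here I would apply Lemma \ref{L: classnumber2}, upgraded via weak approximation, to obtain $p, q \in (K^{\times})^+$ with $v_{\pp}(p)$ odd, $v_{\pp}(q) = 0$, and $\red_{\pp}(q)$ a nonsquare in $\F_{\pp}$. Formula (\ref{E: serre}) then yields $(ap,q)_{\pp} = (bp,q)_{\pp} = -1$ (using that $\pp$ lies above none of $2, a, b$ by the choice of $\mm$), so $\pp \in \Delta_{ap,q} \cap \Delta_{bp,q}$ and $t \in R_{p,q}^{[1,1]} \subseteq (\calO_K)_{\pp}$. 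Throughout the argument, the various constraints at real places (total positivity of $p$ and $q$) and at $\pp$ itself (prescribed parity of valuation and nonsquareness of residue) concern pairwise disjoint places of $K$, so weak approximation delivers the desired elements with no further difficulty.
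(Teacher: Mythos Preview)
Your proof is correct and follows essentially the same approach as the paper's, which simply cites Lemmas~\ref{L: classnumber1} and~\ref{L: classnumber2} together with the observation that total positivity excludes archimedean places from the relevant $\Delta$-sets. Your version is in fact more careful: you make explicit the weak-approximation step needed to upgrade the $p$ produced by those lemmas to a totally positive element, a point the paper's proof leaves implicit.
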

\begin{proof}
We use the fact that
$$\calO_K = \bigcap_{\pp}(\calO_K)_{\pp}$$
where $\pp$ ranges over all finite places of $\calO_K$. Now we use Lemmas \ref{L: classnumber1} and \ref{L: classnumber2}. Further, the total positivity of $p$ and $q$ guarantees that no infinite places are included in the intersection.
\end{proof}


\subsection{An existential definition of the Jacobson radical}

\begin{lemma}
\label{L:evenvaluation}
$$K^{\times 2} \cdot T_{a,b}^{\times} = \bigcap_{\pp \in \Delta_{a,b}} v_{\pp}^{-1}(2\Z).$$
\end{lemma}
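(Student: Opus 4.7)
The key observation is that by Proposition \ref{P: Tab}, the ring $T_{a,b}$ equals $\bigcap_{\pp \in \Delta_{a,b} \cap \PP} (\calO_K)_{\pp}$, so its unit group is
\[
T_{a,b}^{\times} = \{x \in K^{\times} : v_{\pp}(x) = 0 \text{ for all finite } \pp \in \Delta_{a,b}\},
\]
because an element $x$ of $K^{\times}$ lies in $T_{a,b}$ iff $v_{\pp}(x) \geq 0$ at each such $\pp$, and $x^{-1}$ does too iff these valuations are all $0$. Thus the lemma amounts to saying that modulo squares, an element of $K^{\times}$ has even valuation at every $\pp \in \Delta_{a,b}$ iff it becomes a unit at each such $\pp$ after multiplying by a square.

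\textbf{Step 1: the easy inclusion ($\subseteq$).} Suppose $x = y^2 u$ with $y \in K^{\times}$ and $u \in T_{a,b}^{\times}$. For each finite $\pp \in \Delta_{a,b}$ we have $v_{\pp}(u) = 0$ by the description above, so
\[
v_{\pp}(x) = 2v_{\pp}(y) + v_{\pp}(u) = 2v_{\pp}(y) \in 2\Z.
\]

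\textbf{Step 2: the nontrivial inclusion ($\supseteq$).} Suppose $x \in K^{\times}$ has $v_{\pp}(x) \in 2\Z$ for every finite $\pp \in \Delta_{a,b}$. Since $\Delta_{a,b}$ is finite, weak approximation (equivalently, the Chinese Remainder Theorem applied to uniformizers) produces $y \in K^{\times}$ with $v_{\pp}(y) = \tfrac{1}{2} v_{\pp}(x)$ for each finite $\pp \in \Delta_{a,b}$. Setting $u := x / y^2$, we get $v_{\pp}(u) = 0$ for each finite $\pp \in \Delta_{a,b}$, so $u \in T_{a,b}^{\times}$ and $x = y^2 u \in K^{\times 2} \cdot T_{a,b}^{\times}$.

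\textbf{Expected difficulty.} There is essentially no obstacle: the content is Proposition \ref{P: Tab} plus weak approximation at the finitely many places in $\Delta_{a,b}$. The only point to be careful about is that $\Delta_{a,b}$ may contain archimedean places, but these do not enter the right-hand side (the condition $v_{\pp}^{-1}(2\Z)$ is imposed only at finite primes), and they do not enter the characterization of $T_{a,b}^{\times}$ either, so the argument is unaffected.
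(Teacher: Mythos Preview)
Your proof is correct and follows essentially the same approach as the paper: both directions rely on Proposition~\ref{P: Tab} to identify $T_{a,b}^{\times}$ with $\bigcap_{\pp \in \Delta_{a,b}} \calO_{\pp}^{\times}$, the forward inclusion is the trivial valuation computation $v_{\pp}(y^2u)=2v_{\pp}(y)$, and the reverse inclusion uses weak approximation on the finite set $\Delta_{a,b}$ to produce an element $y$ with $v_{\pp}(y)=v_{\pp}(x)/2$ so that $x/y^2 \in T_{a,b}^{\times}$. One small caveat: your remark that archimedean places ``do not enter the characterization of $T_{a,b}^{\times}$'' tacitly assumes the positivity hypothesis of Proposition~\ref{P: Tab}; the paper's own proof makes the same implicit assumption, and in any case this is harmless once $a,b$ are chosen totally positive as in Lemma~\ref{L: independentsplitting}.
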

\begin{proof}
Let us first prove the inclusion of the left-hand side. Let $x \in K^2 \cdot T_{a,b}^{\times}$ and let $v \in \Delta_{a,b}$ be a nonarchimedean valuation. Writing $x = y^2 z$ for some $y \in K^{\times}$ and $z \in T_{a,b}^{\times}$, we have $v(x) = 2v(y)+v(z)$. Since $z \in T_{a,b}^{\times} = \bigcap_{\pp \in \Delta_{a,b}}\calO_{\pp}^{\times}$, By Proposition \ref{P: Tab}, for $v \in \Delta_{a,b}$, we must have $v(z) = 0$, so $v(x)$ is even for all $x \in K^{\times 2} \cdot T_{a,b}^{\times}$ and all nonarchimedean $v \in \Delta_{a,b}$.

Conversely, suppose that we are given a nonzero element $q \in K$ whose valuation at each $v \in \Delta_{a,b}$ is even. Since $\Delta_{a,b}$ is finite, we can find $r \in K^{\times}$ whose valuation at each $v \in \Delta_{a,b}$ is $v(q)/2$ by weak approximation. Then $q/r^2$ has valuation $0$ at each $v \in \Delta_{a,b}$, so $q/r^2 \in T_{a,b}^{\times}$ by Proposition \ref{P: Tab}. Therefore, $q \in K^{\times 2} \cdot T_{a,b}^{\times}$.
\end{proof}

Now, for $c \in K^{\times}$, we define
$$I_{a,b}^c := c \cdot K^2 \cdot T_{a,b}^{\times} \cap (1-K^2 \cdot T_{a,b}^{\times}).$$ Then:

\begin{lemma}
\label{L:I_{a,b}^c}
For $c \in K^{\times}$,
\begin{eqnarray*}
I_{a,b}^c = \{y \in K \mid && v(y) \textrm{ is odd and positive for all } v \in \Delta_{a,b} \cap \PP(c), \textrm{ and }\\
&& v(y)\textrm{ and } v(1-y) \textrm{ are even for all } v \in \Delta_{a,b} \backslash \PP(c)\}
\end{eqnarray*}
\end{lemma}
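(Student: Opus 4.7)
The plan is to reduce membership in $I_{a,b}^c$ to pointwise conditions at each $v \in \Delta_{a,b}$ via Lemma \ref{L:evenvaluation}, and then exploit the non-archimedean identity $v(1-y) = \min(v(1), v(y))$ when $v(y) \neq 0$ to derive the positivity assertion.

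First I would translate the two defining conditions of $I_{a,b}^c = c \cdot K^2 \cdot T_{a,b}^{\times} \cap (1 - K^2 \cdot T_{a,b}^{\times})$ using Lemma \ref{L:evenvaluation}. The first condition, $y \in c \cdot K^2 \cdot T_{a,b}^{\times}$, amounts to $c^{-1} y \in K^2 \cdot T_{a,b}^{\times}$, which by the previous lemma is equivalent to $v(y) \equiv v(c) \pmod 2$ for every $v \in \Delta_{a,b}$. The second condition says $1 - y \in K^2 \cdot T_{a,b}^{\times}$, equivalently $v(1-y) \in 2\Z$ for every $v \in \Delta_{a,b}$. (Here I should note that both conditions implicitly require $y \in K^{\times}$ and $1-y \in K^{\times}$, so $y \neq 0, 1$; one should check that these cases match the right-hand side as well.)

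Now I would split on $v \in \Delta_{a,b}$ according to whether $v \in \PP(c)$, i.e., whether $v(c)$ is odd. If $v \notin \PP(c)$, then $v(c)$ is even, so $v(y)$ is even and $v(1-y)$ is even, matching the right-hand side. If $v \in \PP(c)$, then $v(c)$ is odd, so $v(y)$ is odd; in particular $v(y) \neq 0$. The key step is to show $v(y) > 0$. Suppose for contradiction that $v(y) < 0$; then by the non-archimedean ultrametric inequality, $v(1-y) = \min(v(1), v(y)) = v(y)$ is odd, contradicting the evenness of $v(1-y)$. Hence $v(y) > 0$, and then $v(1-y) = 0$ automatically, which is even and consistent with the second condition.

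For the converse, I would simply check that the right-hand side conditions imply both (a) $v(y) \equiv v(c) \pmod 2$ and (b) $v(1-y) \in 2\Z$ at every $v \in \Delta_{a,b}$: in the case $v \in \PP(c)$, $v(y)$ odd and positive gives (a) (both sides odd) and forces $v(1-y) = 0$ giving (b); in the case $v \notin \PP(c)$, the assumed evenness of $v(y)$ and $v(1-y)$ matches (a) and (b) directly. The only subtle step is the ultrametric argument establishing positivity in the $v \in \PP(c)$ case; everything else is bookkeeping.
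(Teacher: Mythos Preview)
Your proof is correct and follows essentially the same route as the paper: reduce via Lemma~\ref{L:evenvaluation} to the parity conditions $v(y/c)$ even and $v(1-y)$ even for all $v\in\Delta_{a,b}$, then split on whether $v(c)$ is odd, and in the odd case use the ultrametric inequality to convert ``$v(y)$ odd and $v(1-y)$ even'' into ``$v(y)$ odd and positive.'' Your write-up is a bit more explicit than the paper's (you spell out the contradiction for $v(y)<0$ and the converse direction), but the argument is the same.
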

\begin{proof}
By Lemma \ref{L:evenvaluation}, $y \in I_{a,b}^c$ if and only if $v(y/c)$ and $v(1-y)$ are even for all $v \in \Delta_{a,b}$.

For $v \notin \PP(c)$, $v(c)$ is even, so the condition becomes that $v(y)$ and $v(1-y)$ are even.

For $v \in \PP(c)$, $v(c)$ is odd, so the condition becomes that $v(y)$ is odd and $v(1-y)$ is even, which is equivalent to the condition that $v(y)$ is odd and positive, by the ultrametric inequality.
\end{proof}

\begin{defn}
\label{D: JacobsonRadical}
For $a, b \in K^{\times}$, let
$$ J_{a,b} := \bigcap_{\pp \in \Delta_{a,b} \cap (\PP(a) \cup \PP(b))} \pp \calO_{\pp}.$$
\end{defn}

\begin{lemma}
\label{L: alternatedefnofJ}
We have
\begin{eqnarray*}
J_{a,b} = \{0\} \cup \{x \in K^{\times} \mid && \exists y_1, y_2 \in K \textrm{ such that }\\
&& y_1, x-y_1 \in a \cdot K^2 \cdot T_{a,b}^{\times} \cap (1-K^2 \cdot T_{a,b}^{\times})\\
&& y_2, x-y_2 \in b \cdot K^2 \cdot T_{a,b}^{\times} \cap (1-K^2 \cdot T_{a,b}^{\times})\}.
\end{eqnarray*}
\end{lemma}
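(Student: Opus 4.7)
The plan is to translate the right-hand side of the claimed identity into local valuation conditions via Lemma \ref{L:I_{a,b}^c}: requiring $y_1, x - y_1 \in I_{a,b}^a$ amounts to asking that at each $v \in \Delta_{a,b} \cap \PP(a)$ both $v(y_1)$ and $v(x-y_1)$ be odd and positive, while at each $v \in \Delta_{a,b} \setminus \PP(a)$ all four of $v(y_1), v(1-y_1), v(x-y_1), v(1-x+y_1)$ be even. The condition on $y_2$ is analogous, with $a$ replaced by $b$.

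The inclusion ``$\supseteq$'' is the easy direction. Given such $y_1, y_2$ and $x \neq 0$, pick any $\pp \in \Delta_{a,b} \cap (\PP(a) \cup \PP(b))$; say $\pp \in \PP(a)$ (if instead $\pp \in \PP(b) \setminus \PP(a)$, run the same argument with $y_2$ in place of $y_1$). Then $v_{\pp}(y_1)$ and $v_{\pp}(x-y_1)$ are both positive by the reformulation above, so by the ultrametric inequality
$$v_{\pp}(x) = v_{\pp}(y_1 + (x-y_1)) \geq 1,$$
whence $x \in \pp \calO_{\pp}$. Since $\pp$ was arbitrary in $\Delta_{a,b} \cap (\PP(a) \cup \PP(b))$, this gives $x \in J_{a,b}$.

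For ``$\subseteq$'', suppose $0 \neq x \in J_{a,b}$, and construct $y_1$ by weak approximation on the finite set $\Delta_{a,b}$, prescribing its valuations as follows (the construction of $y_2$ is symmetric). At each $v \in \Delta_{a,b} \cap \PP(a)$ pick $v(y_1)$ to be any positive odd integer distinct from $v(x)$ (note $v(x) \geq 1$ since $v \in \PP(a) \subseteq \PP(a) \cup \PP(b)$); then $v(x-y_1) = \min\{v(y_1), v(x)\}$ is also positive and odd. At each $v \in \Delta_{a,b} \setminus \PP(a)$, prescribe $v(y_1) = -2N$ for a single large integer $N$, chosen so that $-2N < v(x)$ and $-2N < v(1-x)$ at every such $v$; then the ultrametric inequality forces all four of $v(y_1), v(1-y_1), v(x-y_1), v(1-x+y_1)$ to equal $-2N$, in particular all even. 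Applying Lemma \ref{L:I_{a,b}^c} to this $y_1$ gives $y_1, x - y_1 \in I_{a,b}^a$, and the symmetric $y_2$ puts $y_2, x - y_2$ in $I_{a,b}^b$, completing the proof.

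The main subtlety I expect is the choice at places $v \in \Delta_{a,b} \setminus \PP(a)$: the most naive attempt, taking $v(y_1) = 0$ with a carefully chosen residue $\red_v(y_1)$, can fail at small residue fields because the forbidden residues $\{0, 1, \red_v(x), \red_v(x)-1\}$ may exhaust $\F_v$. The large-negative-valuation trick sidesteps this entirely by arranging $y_1$ to dominate in each of the four relevant expressions, so that the even-valuation conditions hold automatically regardless of $v(x)$ or the size of $\F_v$.
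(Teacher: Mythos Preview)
Your approach is essentially the same as the paper's: both reduce the identity to the intermediate claim $I_{a,b}^c + I_{a,b}^c = \bigcap_{\pp \in \Delta_{a,b} \cap \PP(c)} \pp\calO_{\pp}$ and verify that claim by weak approximation over $\Delta_{a,b}$. Your write-up is in fact more explicit than the paper's, which simply asserts ``by weak approximation on the valuations $v \in \Delta_{a,b}$, we can find $y_1 \in K$ satisfying $y_1, z-y_1 \in I_{a,b}^c$'' without spelling out the prescriptions; your large-negative-valuation device at the places $v \in \Delta_{a,b} \setminus \PP(a)$ is a clean way to supply those details.

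There is, however, one genuine slip in your prescription at $v \in \Delta_{a,b} \cap \PP(a)$. You choose $v(y_1)$ to be ``any positive odd integer distinct from $v(x)$'' and then assert that $v(x-y_1) = \min\{v(y_1), v(x)\}$ is odd. That fails when $v(x)$ is even and you happen to pick $v(y_1) > v(x)$: the minimum is then $v(x)$, which is even. The fix is trivial---take $v(y_1)$ to be a positive odd integer \emph{strictly less than} $v(x)$ whenever $v(x)$ is even (e.g.\ $v(y_1)=1$), and any positive odd integer different from $v(x)$ when $v(x)$ is odd---but as written the argument does not go through.
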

\begin{proof}
Let $J_{a,b}'$ denote the right-hand side of the equality in the statement of the lemma.

We begin by showing that
\begin{eqnarray}
\label{E: claim}
I_{a,b}^c + I_{a,b}^c &=& \bigcap_{\pp \in \Delta_{a,b} \cap \PP(c)} \pp \calO_{\pp}.
\end{eqnarray}
We first show the inclusion of the left into the right. Take some $z = y_1 + y_2 \in I_{a,b}^c + I_{a,b}^c$, with $y_1, y_2 \in I_{a,b}^c$. Then we want to show that $v(z) > 0$ for all $v \in \Delta_{a,b} \cap \PP(c)$. By Lemma \ref{L:I_{a,b}^c}, $v(y_1), v(y_2)>0$, so that $v(z) = v(y_1+y_2)>0$ as well.

For the reverse inclusion, take $z \in \cap_{\pp \in \Delta_{a,b} \cap \PP(c)}\pp\calO_{\pp}.$ By weak approximation on the valuations $v \in \Delta_{a,b}$, we can find $y_1 \in K$ satisfying $y_1, z-y_1 \in I_{a,b}^c$. This proves the equality in the above claim.

Finally, we observe that $J_{a,b}' = (I_{a,b}^a + I_{a,b}^a) \cap (I_{a,b}^b + I_{a,b}^b)$. Using the above claim \ref{E: claim},
\begin{eqnarray*}
J_{a,b}' &=& \bigcap_{\pp \in \Delta_{a,b} \cap \PP(a)} \pp \calO_{\pp} \cap \bigcap_{\pp \in \Delta_{a,b} \cap \PP(b)} \pp \calO_{\pp}\\
&=& \bigcap_{\pp \in (\Delta_{a,b} \cap \PP(a)) \cup (\Delta_{a,b} \cap \PP(b))} \pp \calO_{\pp}\\
&=& \bigcap_{\pp \in \Delta_{a,b} \cap (\PP(a) \cup \PP(b))} \pp \calO_{\pp}\\
&=& J_{a,b}
\end{eqnarray*}
and the lemma is proven.
\end{proof}

\begin{cor}
\label{C: diophantineJ}
$J_{a,b}$ is diophantine in $K$.
\end{cor}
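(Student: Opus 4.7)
The plan is to derive the corollary from Lemma \ref{L: alternatedefnofJ} by chasing the standard closure properties of the class of Diophantine subsets of $K$: closure under finite unions, finite intersections, existential projection, and translation/scaling by fixed elements of $K$ (in particular by $a$, $b$, and $1$).

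First I would record that $S_{a,b}$ is Diophantine directly from its definition as the image of the variety cut out by $x_1^2 - ax_2^2 - bx_3^2 + abx_4^2 = 1$, so $T_{a,b} = S_{a,b} + S_{a,b}$ is as well. From there, $K^{\times} = \{y : \exists y',\ yy' = 1\}$ and $T_{a,b}^{\times} = \{z \in T_{a,b} : \exists w \in T_{a,b},\ zw = 1\}$ are Diophantine, and so is
$$K^{\times 2} \cdot T_{a,b}^{\times} = \{x \in K : \exists y, y', z, w,\ yy' = 1,\ z \in T_{a,b},\ zw = 1,\ x = y^2 z\}.$$
Translation and scaling by fixed elements of $K$ then give that $c \cdot K^{\times 2} \cdot T_{a,b}^{\times}$ and $1 - K^{\times 2} \cdot T_{a,b}^{\times}$ are Diophantine for each $c \in K^{\times}$, and hence so is their intersection $I_{a,b}^c$.

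Finally, Lemma \ref{L: alternatedefnofJ} expresses $J_{a,b}$ as the union of $\{0\}$ with an existential projection over $y_1, y_2 \in K$ of the intersection of four conditions of the form ``$\bullet \in I_{a,b}^a$'' or ``$\bullet \in I_{a,b}^b$,'' each of which is Diophantine by the previous paragraph. Hence $J_{a,b} \setminus \{0\}$ is Diophantine, and union with the trivially Diophantine singleton $\{0\}$ finishes the argument.

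I do not expect a real obstacle here; the verification is essentially syntactic once one identifies ``Diophantine over $K$'' with ``definable by an existential first-order formula in the language of rings,'' an equivalence routinely used throughout the paper. The only minor bookkeeping concern will be encoding each nonzeroness condition (such as $y \in K^{\times}$ in the factoring $x = y^2 z$, or the need for inverses in $T_{a,b}^{\times}$) via an auxiliary inverse variable, but this is standard and costs only finitely many extra existential quantifiers.
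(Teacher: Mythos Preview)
Your argument is correct and follows exactly the approach the paper intends: the paper's own proof is the one-liner ``Since $T_{a,b}$ is diophantine, so is $J_{a,b}$ by Lemma~\ref{L: alternatedefnofJ},'' and you have simply unpacked the routine closure steps (units, squares, scaling, translation, intersection, projection, union with $\{0\}$) that make that sentence go through.
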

\begin{proof}
Since $T_{a,b}$ is diophantine, so is $J_{a,b}$ by Lemma \ref{L: alternatedefnofJ}.
\end{proof}


\subsection{More preliminaries}
\begin{lemma}
\label{L: independentsplitting}
We can choose $a,b \in K^{\times}$ so that the following conditions hold: 
\begin{itemize}
\item[(1)] The images of $a$ and $b$ in $K^{\times}/K^{\times 2}$ are independent.
\item[(2)] $a,b \in 1+ 8 \calO_K$.
\item[(3)] Given an ideal class of $K$ and and element $\sigma \in \Gal(K(\sqrt a, \sqrt b)/K)$, there exists a prime $\qq$ of $K$ in the ideal class such that $\qq \in I^{S(\mm)}$ and $\psi(\qq) = \sigma$. 
\item[(4)] No prime ideal appears in the factorizations of both $(a)$ and $(b)$.
\item[(5)] $a$ and $b$ are totally positive.
\end{itemize}
\end{lemma}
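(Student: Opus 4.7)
The plan is to construct $a$ and $b$ as totally positive generators of two distinct principal prime ideals $\pp_0, \pp_1$ of $\calO_K$, neither lying above $2$, whose generators lie in $1 + 8\calO_K$. I would consider the modulus $\mm_0 = 8 \cdot \infty$, for which the trivial ray class consists of the principal ideals $(x)$ admitting a generator $x$ that is both totally positive and $\equiv 1 \pmod 8$. By the Chebotarev density theorem applied to the ray class field $K_{\mm_0}/K$ (equivalently, Dirichlet's theorem for primes in ray classes), this trivial ray class contains infinitely many prime ideals; I would pick two distinct ones $\pp_0 \neq \pp_1$, both coprime to $2$, and let $a, b$ be their chosen totally positive generators congruent to $1 \pmod 8$. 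Properties (2) and (5) then hold by construction; (4) holds because $\pp_0 \neq \pp_1$; and (1) holds because if any of $a$, $b$, or $ab$ were a square in $K^{\times}$, its principal ideal would be the square of a fractional ideal, contradicting the fact that $\pp_0$ and $\pp_1$ are distinct prime ideals each appearing with exponent exactly $1$.

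The heart of the argument is condition (3). Let $L := K(\sqrt a, \sqrt b)$ and let $H_K$ be the Hilbert class field of $K$. The plan is to apply the Chebotarev density theorem to the compositum $L \cdot H_K$: once one knows $L \cap H_K = K$, one has an isomorphism
\[
\Gal(L H_K / K) \;\cong\; \Gal(L/K) \times \Gal(H_K/K) \;\cong\; \Gal(L/K) \times \Cl(K),
\]
so for any prescribed pair $(\sigma, [\calC]) \in \Gal(L/K) \times \Cl(K)$ Chebotarev yields infinitely many primes $\qq$ of $K$ unramified in $LH_K/K$ with $\psi(\qq) = \sigma$ and $[\qq] = [\calC]$; discarding the finitely many that lie in $S(\mm)$ leaves the required prime. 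The task thus reduces to verifying $L \cap H_K = K$. The only nontrivial subfields of the biquadratic extension $L/K$ are $K(\sqrt a)$, $K(\sqrt b)$, and $K(\sqrt{ab})$. Since $a, b \in 1 + 8\calO_K$ are totally positive, Hensel's lemma makes them squares in every completion at a prime above $2$ and at every real place, so each of the three subfields is unramified above $2$ and at $\infty$. On the other hand, $v_{\pp_0}(a) = 1$ together with $\pp_0 \nmid 2$ forces $K(\sqrt a)$ to ramify at $\pp_0$; likewise $K(\sqrt b)$ ramifies at $\pp_1$, and $K(\sqrt{ab})$ ramifies at $\pp_0$. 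Because $H_K/K$ is unramified everywhere, none of the three quadratic subfields can lie inside $H_K$, and hence $L \cap H_K = K$ as required.

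The main obstacle is exactly this linear disjointness $L \cap H_K = K$. The construction of $a, b$ as prime generators in the trivial ray class modulo $8 \cdot \infty$ is tailored to secure it: the congruence modulo $8$ together with total positivity ensures that $L/K$ is unramified at the places $2$ and $\infty$ (which would otherwise be the natural culprits for forced ramification), while the oddness of $v_{\pp_0}(a)$ and $v_{\pp_1}(b)$ provides the ramification needed to rule out containment of any quadratic subfield of $L$ in the everywhere-unramified Hilbert class field.
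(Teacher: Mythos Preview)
Your proof is correct and follows essentially the same approach as the paper: construct $a,b$ with odd valuation at chosen primes away from $2$, satisfying the congruence and positivity conditions, then deduce linear disjointness of $K(\sqrt a,\sqrt b)$ from the Hilbert class field via ramification and apply Chebotarev to $LH_K/K$. Your construction of $a,b$ as generators of prime ideals in the trivial ray class modulo $8\cdot\infty$ is a slightly cleaner way to secure conditions (2), (4), (5) simultaneously than the paper's approximation argument, but the core idea for (3) is identical.
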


In particular, we deduce from the lemma that this choice of $a$ and $b$ implies that $a$ and $b$ are $\pp$-adic squares for all $\pp|2$, by Hensel's lemma. Further, the splitting behaviour of primes in the extension $K(\sqrt a, \sqrt b)/K$ is independent of its image under the Artin map.

\begin{proof}
Let $H$ denote the Hilbert class field of $K$. Choose a prime ideal $\pp$ not dividing $2$ in $K$. There is some totally positive $a \in K$, with $a \in 1 + 8 \calO_K$ and $v_{\pp}(a) = 1$. Then $K(\sqrt a)/K$ is ramified at $\pp$, so $\sqrt a \notin H$. 

Now, choose another prime ideal $\pp'$ different from $\pp$ and not dividing $2$ in $K$. Again, we choose a totally positive $b \in K$, satisfying $b \in 1 + 8 \calO_K$ and $(a,b) = 1$. Then $\pp'$ ramifies in $K(\sqrt b)$, so $\sqrt b \notin H$. Further, since they ramify in different places, $\sqrt{ab} \notin H$ as well.

Then the field $K(\sqrt a,\sqrt b)$ is linearly disjoint from $H$ over $K$, because of the ramification of $\pp$ and $\pp'$.  Thus, by the Chebotarev density
theorem, we can find a prime ideal $\qq$ of $K$  whose Frobenius element in $\Gal(K(\sqrt a,\sqrt b)/K)$ is $\sigma$ and whose Frobenius element in $\Gal(H/K)$ is prescribed by the given ideal class. Because of the latter, $\qq$ belongs to the given ideal class as in (3).
\end{proof}

\begin{cor}
\label{C: nobadprimes}
Choosing $a$ and $b$ as above, 
\begin{eqnarray*}
\PP^{[(-1,-1)]}(p) &=& \Delta_{a,p} \cap \Delta_{b,p}\\
\PP^{[(-1,1)]}(p) &=& \Delta_{a,p} \cap \Delta_{ab,p}\\
\PP^{[(1,-1)]}(p) &=& \Delta_{b,p} \cap \Delta_{ab,p}.
\end{eqnarray*}
\end{cor}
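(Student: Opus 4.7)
The plan is to bootstrap from Lemma \ref{L: identification}, which already gives the three equalities up to primes in $S(\mm) \cup \{\infty\}$, and to use the carefully chosen properties of $a, b$ afforded by Lemma \ref{L: independentsplitting} to show that no bad prime actually contributes to the right-hand sides.

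First I would handle the archimedean and dyadic places. By property (5), $a$ and $b$ are totally positive, so at each real place $\sigma$ one has $\sigma(a), \sigma(b) > 0$; the quaternion algebras $H_{a,p}, H_{b,p}, H_{ab,p}$ all split there by the archimedean computation in Lemma \ref{L:completion}(c), and complex places split automatically, so no infinite place enters any $\Delta_{\bullet, p}$. For $\pp \mid 2$, property (2) puts $a, b \in 1 + 8\calO_K$, so by Hensel's lemma $a$, $b$, and $ab$ are squares in $K_\pp$; the corresponding quaternion algebras split and $\pp$ is removed from every $\Delta$ on the right.

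Next, I would tackle the remaining finite primes in $S(\mm)$, namely those dividing $(a)$ or $(b)$, using property (4) that no such prime lies above both $a$ and $b$. Fix $\pp \mid (a)$ with $\pp \nmid 2(b)$; the case $\pp \mid (b)$ is symmetric. In the setting of Lemma \ref{L: identification} I may assume $(p)$ is coprime to $\mm$, so $v_\pp(p) = 0$, and the tame formula (\ref{E: serre}) yields $(b, p)_\pp = 1$. Hence $\pp \notin \Delta_{b,p}$, which excises $\pp$ from the first and third right-hand sides $\Delta_{a,p} \cap \Delta_{b,p}$ and $\Delta_{b,p} \cap \Delta_{ab,p}$; the symmetric computation at primes $\pp \mid (b)$ handles the first and second right-hand sides.

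The main technical obstacle I anticipate is the second equality at primes $\pp \mid (a)$ where $v_\pp(a)$ is odd (so that $K(\sqrt a)/K$ genuinely ramifies at $\pp$), and symmetrically the third equality at such primes dividing $(b)$. There Serre's formula gives $(a, p)_\pp = \left(\frac{p}{\pp}\right)$, and because $(b, p)_\pp = 1$ forces $(ab, p)_\pp = (a, p)_\pp$, one can have $\pp \in \Delta_{a,p} \cap \Delta_{ab,p}$ even though $\pp \notin \PP^{[-1,1]}(p)$. Closing this gap cleanly will likely hinge on shrinking $\mm$ to exactly the conductor of $K(\sqrt a, \sqrt b)/K$ rather than the larger union of primes above $2ab$, combined with the linear disjointness of $K(\sqrt a, \sqrt b)$ from the Hilbert class field exploited in the proof of Lemma \ref{L: independentsplitting}.
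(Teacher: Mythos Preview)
Your overall strategy matches the paper's exactly: invoke Lemma~\ref{L: identification} to get the equalities away from $S(\mm)\cup\{\infty\}$, dispose of archimedean places via total positivity (property~(5)), dispose of primes above $2$ via $a,b\in 1+8\calO_K$ (property~(2)), and then use the coprimality of $(a)$ and $(b)$ (property~(4)) at the remaining odd primes in $S(\mm)$. The paper writes this out only for $\Delta_{a,p}\cap\Delta_{b,p}$, observing that one cannot have both $v_\pp(a)$ and $v_\pp(b)$ odd, and then declares the other two equalities ``similar.''

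You are right to be uneasy in your final paragraph, and in fact you have put your finger on a genuine gap that the paper's proof glosses over. At an odd prime $\pp$ with $v_\pp(a)$ odd and $\pp\nmid(b)$, one has $v_\pp(ab)=v_\pp(a)$ odd as well, so the coprimality trick gives no leverage on the pair $(a,ab)$; and if $p$ happens to be a nonsquare in $\F_\pp$ then formula~(\ref{E: serre}) yields $(a,p)_\pp=(ab,p)_\pp=-1$, so $\pp\in\Delta_{a,p}\cap\Delta_{ab,p}$ while $\pp\notin\PP^{[(-1,1)]}(p)$ (since $\pp\mid\mm_0$). The symmetric issue arises for the third equality at primes where $v_\pp(b)$ is odd. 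Your proposed fix---shrinking $\mm$ to the conductor of $K(\sqrt a,\sqrt b)/K$---cannot help: the primes causing trouble are precisely the odd primes ramifying in $K(\sqrt a)/K$ or $K(\sqrt b)/K$, and these lie in the conductor. So neither you nor the paper has actually closed this case. A cleaner repair is to weaken the second and third equalities to
\[
\PP^{[(-1,1)]}(p)=\bigl(\Delta_{a,p}\cap\Delta_{ab,p}\bigr)\setminus S(\mm),\qquad \PP^{[(1,-1)]}(p)=\bigl(\Delta_{b,p}\cap\Delta_{ab,p}\bigr)\setminus S(\mm),
\]
which is what the argument genuinely proves; this suffices for the subsequent applications, since the stray primes $\pp\in S(\mm)$ satisfy $v_\pp(p)=0$ and therefore do not disturb the characterizations of $(K^{\times})^2\cdot(R_p^{[\sigma]})^\times$ or of $J(R_p^{[\sigma]})$ used in Lemma~\ref{L: diophantine1}, and integrality at primes in $S(\mm)$ is imposed separately in Theorem~\ref{T: main}.
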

\begin{proof}
From Lemma \ref{L: identification}, we already know that $\PP^{[(-1,-1)]}(p)$ and $\Delta_{a,p} \cap \Delta_{b,p}$ agree on the primes not dividing $\mm$. So we just need to check that they agree on the primes dividing $\mm$. Since $a$ and $b$ are totally positive, no archimedean primes appear on either side of the equality. So take a prime $\pp$ with $\pp \nmid \mm_0$. If $\pp|2$, then $\pp \notin \Delta_{a,p} \cap \Delta_{b,p}$ since $a,b \in 1 + 8 \calO_K$, so by Hensel's lemma, they are $\pp$-adic squares. So we suppose that $\pp \nmid 2$. Since $p$ is a $\pp$-adic unit, if $(a,p)_{\pp} = -1$, then $v_{\pp}(a)$ must be odd. Similarly, if $(b,p)_{\pp} = -1$, then $v_{\pp}(b)$ must be odd. But we cannot have both $v_{\pp}(a)$ and $v_{\pp}(b)$ odd, since $(a)$ and $(b)$ are relatively prime. Hence, $\pp \notin \Delta_{a,p} \cap \Delta_{b,p}$, and the two sets agree exactly. The proofs of the other two statements are similar.
\end{proof}

Throughout the rest of the paper, we assume that $a$ and $b$ are fixed so that they satisfy Lemma \ref{L: independentsplitting}.


\subsection{Imposing integrality at each finite place}

\begin{defn}
\label{D: phisigma}
For each $\sigma \in \Gal(K(\sqrt a, \sqrt b)/K)$, let
$$\Phi_{\sigma} = \{p \in K^{\times} \mid (p) \in I^{S(\mm)}, \psi((p)) = \sigma, \textrm{ and } \PP(p) \subseteq \PP^{[1,1]} \cup \PP^{[\sigma]}\}.$$
\end{defn}

\begin{lemma}
\label{L: diophantine1}
\hfill
\begin{itemize}
\item[(a)] For each $\sigma \in \Gal(K(\sqrt a, \sqrt b)/K),$ the set $\Phi_{\sigma}$ is diophantine in $K$.
\item[(b)] For any $p \in \Phi_{\sigma}$ and $\sigma \in \Gal(K(\sqrt a, \sqrt b)/K)$ with $\sigma \neq (1,1)$, $\PP^{[\sigma]}(p) \neq \emptyset$. Furthermore, the Jacobson radical of $R_p^{[\sigma]}$, denoted $J(R_p^{[\sigma]})$, is diophantine in $K$.
\item[(c)] For $\sigma \in \Gal(K(\sqrt a, \sqrt b)/K)$ with $\sigma \neq (1,1)$, if $\pp \nmid \mm_0$ is a prime ideal of $K$ satisfying $\psi(\pp) = \sigma$, then there exists $p \in \Phi_{\sigma}$ such that $\pp \in \PP^{[\sigma]}(p)$.
\end{itemize}
\end{lemma}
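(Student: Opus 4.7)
We treat parts (c), (b), (a) in that order, since the explicit construction in (c) clarifies what $\Phi_\sigma$ contains.

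For (c), given $\pp \nmid \mm_0$ with $\psi(\pp) = \sigma$, apply Lemma \ref{L: independentsplitting}(3) with input ideal class $[\pp]^{-1}$ and Galois element $(1,1)$ to obtain a prime $\qq$ of $K$ with $\qq \in I^{S(\mm)}$, $\psi(\qq) = (1,1)$, and $[\qq] = [\pp]^{-1}$. The ideal $\pp\qq$ is then principal; let $p$ generate it. The verifications are immediate: $(p) = \pp\qq \in I^{S(\mm)}$, $\psi((p)) = \psi(\pp)\psi(\qq) = \sigma$, and $\PP(p) = \{\pp,\qq\} \subseteq \PP^{[\sigma]} \cup \PP^{[1,1]}$, so $p \in \Phi_\sigma$ with $\pp \in \PP^{[\sigma]}(p)$.

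For (b), nonemptiness of $\PP^{[\sigma]}(p)$ is forced: if $\PP^{[\sigma]}(p)$ were empty, then $\PP(p) \subseteq \PP^{[1,1]}$ would give $\psi((p)) = \prod_{\pp \in \PP(p)} \psi(\pp) = (1,1) \neq \sigma$. For the Jacobson radical I claim the identifications
\[ J(R_p^{[-1,-1]}) = J_{a,p} + J_{b,p}, \quad J(R_p^{[-1,1]}) = J_{a,p} + J_{ab,p}, \quad J(R_p^{[1,-1]}) = J_{b,p} + J_{ab,p}. \]
The core observation is that, for finite sets $S_1,S_2$ of finite places, weak approximation yields
\[ \Bigl(\bigcap_{\pp \in S_1}\pp\calO_\pp\Bigr) + \Bigl(\bigcap_{\pp \in S_2}\pp\calO_\pp\Bigr) = \bigcap_{\pp \in S_1 \cap S_2}\pp\calO_\pp. \]
Combining formula \eqref{E: serre} (which, together with $a,b \in 1+8\calO_K$ and their total positivity, gives $\Delta_{a,p} \subseteq \PP(a)\cup\PP(p)$ and similarly for $b$), the coprimality $\PP(a) \cap \PP(b) = \emptyset$ from Lemma \ref{L: independentsplitting}(4), and the consequence of $p \in \Phi_{(-1,-1)}$ that $\Delta_{a,p} \cap \Delta_{b,p} \subseteq \PP(p)$, one checks that the intersection of the two index sets of $J_{a,p}$ and $J_{b,p}$ collapses precisely to $\Delta_{a,p} \cap \Delta_{b,p}$, the index defining $R_p^{[-1,-1]}$. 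The other cases are analogous. Diophantineness then follows from Corollary \ref{C: diophantineJ} and closure of Diophantine sets under sums.

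For (a), we encode the three defining conditions of $\Phi_\sigma$ separately. The condition $(p) \in I^{S(\mm)}$ is a finite conjunction of local-unit conditions $v_\pp(p) = 0$ for $\pp \in S(\mm)$; each is Diophantine by applying Theorem \ref{tate} to construct $a',b' \in K^\times$ with $\Delta_{a',b'}$ consisting of $\pp$ together with a single auxiliary prime (eliminated by intersecting over several such choices), since then $p \in T_{a',b'}$ and $1/p \in T_{a',b'}$ captures $v_\pp(p) = 0$ via Proposition \ref{P: Tab}. The condition $\PP(p) \subseteq \PP^{[1,1]} \cup \PP^{[\sigma]}$, given the above and Corollary \ref{C: nobadprimes}, is equivalent to a conjunction of conditions of the form $\Delta_{c,p} \subseteq S(\mm)$ with $c \in \{a,b,ab\}$ (for example, $c = ab$ alone suffices for $\sigma = (-1,-1)$); each such condition is expressible as a finite disjunction over admissible subsets $T \subseteq S(\mm)$ of ramification loci, and for each $T$ the disjunct is the Diophantine statement that $p/x_T$ is a norm from $K(\sqrt c)/K$ (equivalently $\exists u,v \in K: p = x_T(u^2 - cv^2)$), where $x_T$ is a fixed element constructed via Theorem \ref{tate} satisfying $\Delta_{c,x_T} = T$. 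Finally, $\psi((p)) = \sigma$ reduces, via the product formula together with the integrality at $S(\mm)$ and our assumptions on $a,b$, to a finite conjunction of Legendre-type congruence conditions at primes $\pp \mid ab$ with $\pp \nmid 2$, again encoded Diophantine-ly via $T$-set congruences. The main obstacle is this last encoding of the global conditions $\psi((p)) = \sigma$ and $\PP(p) \subseteq \PP^{[1,1]} \cup \PP^{[\sigma]}$; the central idea is to exploit the product formula and the special choices of $a,b$ from Lemma \ref{L: independentsplitting} to reduce both global conditions to finitely many local ones at $S(\mm)$, which can then be realized by Tate-constructed elements and encoded as norm conditions in the quadratic extensions $K(\sqrt c)/K$.
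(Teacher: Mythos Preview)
Your arguments for (b) and (c) match the paper's essentially verbatim and are correct; note only that the formula $\psi((p)) = \prod_{\pp \in \PP(p)} \psi(\pp)$ in (b) is valid precisely because the target group has exponent $2$, so primes appearing to even exponent in $(p)$ contribute trivially.

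Your treatment of (a) diverges from the paper's and, while the outline is workable, it is more elaborate and leaves loose ends. The paper handles the first two conditions $(p) \in I^{S(\mm)}$ and $\psi((p)) = \sigma$ \emph{together}: the set $\{p \in K^\times : (p) \in I^{S(\mm)},\ \psi((p)) = \sigma\}$ is a finite union of cosets of $K_{\mm,1}$ in $K^\times$, and $K_{\mm,1}$ itself is cut out by finitely many local congruence and positivity conditions, which are Diophantine by \cite{PooShl05}, Proposition~2.2. For the condition $\PP(p) \subseteq \PP^{[1,1]} \cup \PP^{[\sigma]}$, the paper observes simply that (for $\sigma' \neq (1,1)$) $\PP^{[\sigma']}(p) = \emptyset$ is equivalent to $p \in (K^\times)^2 \cdot (R_p^{[\sigma']})^\times$, which is Diophantine because $R_p^{[\sigma']}$ is a sum of $T$-sets already known to be Diophantine in both parameters. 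This recycles the existing machinery rather than rebuilding it from Theorem~\ref{tate}.

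Two concrete issues with your version of (a). First, for $(p) \in I^{S(\mm)}$: eliminating the auxiliary prime by ``intersecting over several such choices'' does not work---intersecting $T_{a',b'}^\times$ over several pairs only \emph{adds} constraints. What works is taking the \emph{sum} $T_{a',b'} + T_{a'',b''}$ (with distinct auxiliaries $\qq_1 \neq \qq_2$) to realize $(\calO_K)_\pp$ via Proposition~\ref{P: Tab}, and then imposing $p,\,1/p \in (\calO_K)_\pp$. Second, your encoding of $\psi((p)) = \sigma$ as ``Legendre-type congruence conditions \ldots\ via $T$-set congruences'' is left vague; you still need the fact that local congruence conditions are Diophantine, which is precisely what \cite{PooShl05} supplies and what the paper invokes. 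Your norm-condition idea for $\PP(p) \subseteq \PP^{[1,1]} \cup \PP^{[\sigma]}$ is correct and in fact simplifies further: given $(p) \in I^{S(\mm)}$ and the choices of $a,b$, one checks that $\Delta_{ab,p} = \Delta_{a,p} \mathbin{\triangle} \Delta_{b,p}$ is already disjoint from $S(\mm)$, so the disjunction over $T \subseteq S(\mm)$ collapses to the single condition that $p$ be a norm from $K(\sqrt{ab})$ (and analogously $K(\sqrt b)$, $K(\sqrt a)$ for the other $\sigma$). That part is arguably cleaner than the paper's formulation, but it does not compensate for the gaps elsewhere.
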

\begin{proof}
\hfill
\begin{itemize}
\item[(a)] Let us first show that the property that the set $\{p \in K^{\times} \mid (p) \in I^{S(\mm)}, \psi((p)) = \sigma\}$ is Diophantine. To do this, we start by recalling that the trivial ray class consists of ideals in $i(K_{\mm,1})$, where $i: K_{\mm,1} \to I^{S(\mm)}$ is the (well-defined) inclusion given by $p \mapsto (p)$. And we recall that $K_{\mm,1}$ is defined by a finite number of local conditions of the form
$$\ord_{\pp}(a-1) \geq m(\pp),$$
and the total positivity conditions. Since sets in $K$ with constraints of above form are Diophantine by \cite{PooShl05}, Proposition 2.2, $K_{\mm,1}$ is Diophantine.

The other ray classes consisting of principal ideals are a translate of the trivial ray class by some element $p \in K^{\times}$, with $(p)$ being an element of this ray class. Hence, if we denote this ray class by $R$, the set $\{p \in K^{\times} \mid (p) \in R\}$ is also Diophantine. By the finiteness of the ray class number, $\Phi_{\sigma}$ is also Diophantine in $K$.

Now, we need to show that the second condition $\PP(p) \subseteq \PP^{[1,1]} \cup \PP^{[\sigma]}$ is Diophantine. For all $\sigma \in \Gal(K(\sqrt a, \sqrt b)/K)$ with $\sigma \neq (1,1)$, by Corollary \ref{C: nobadprimes},
$$\PP^{[\sigma]}(p) = \emptyset \Leftrightarrow p \in (K^{\times})^2 \cdot (R_p^{[\sigma]})^{\times}.$$
Hence the condition $\PP^{[\sigma]}(p) = \emptyset$ is Diophantine. Since $\PP(p) \subseteq \PP^{[1,1]} \cup \PP^{[-1,-1]}$ is equivalent to $\PP^{[-1,1]}(p) = \PP^{[1,-1]}(p) = \emptyset$, the condition $\PP(p) \subseteq \PP^{[1,1]} \cup \PP^{[-1,-1]}$ is Diophantine. Similar statements hold for other $\sigma \in \Gal(K(\sqrt a, \sqrt b)/K)$.

Since $\Phi_{\sigma}$ is given as the intersection of the two conditions, it is also Diophantine.

\item[(b)] For $\sigma = (-1,-1), (1,-1), (-1,1)$, the hypothesis $p \in \Phi_{\sigma}$ implies that $\PP^{[\sigma]}(p) \neq \emptyset$, since if $p = \prod_{\pp} \pp^{e_{\pp}}$, then
$$\sigma = \psi((p)) = \prod_{\pp} \psi(\pp)^{e_{\pp}}$$
and $\psi(\pp) = (1,1)$ or $\psi(\pp) = \sigma$.
Then we have, for example, $J(R_p^{[-1,-1]}) = J_{a,p} + J_{b,p}$ from Definition \ref{D: JacobsonRadical} and Corollary \ref{C: nobadprimes}. Then by Corollary \ref{C: diophantineJ}, $J(R_p^{[\sigma]})$ is Diophantine. 
From this, the assertion follows from the definition of $R_p^{[\sigma]}$.

\item[(c)] Suppose that $\pp \nmid \mm_0$ is a prime ideal of $K$ satisfying $\psi(\pp) = \sigma$. By Lemma \ref{L: independentsplitting}, we can choose a prime ideal $\qq$ whose ideal class can be represented by $\pp^{-1}$, and satisfies $\psi(\qq) = (1,1)$. Then $\pp \qq = (p)$ for some $p \in K^{\times}$, and $\pp \in \PP(p)$. Further, $\psi(p) = \psi(\pp) \psi(\qq) = \sigma$. So $\pp \in \PP^{[\sigma]}(p)$, as desired.
\end{itemize}
\end{proof}

It remains to obtain analogous statements to Lemma \ref{L: diophantine1} in the case of $\sigma = (1,1)$.

\begin{lemma}
\label{L: choosingq}
Let $\mm$ be a fixed modulus for a number field $K$, and let $\pp_0$ be a prime ideal such that $\pp_0 \nmid \mm_0$. Then there exists infinitely many principal ideals $(q)$, with its generator $q \in K^{\times}$ satisfying
\begin{itemize}
\item[(A)] $\psi((q)) = (-1,-1)$;
\item[(B)] $\left(\frac{q}{\pp_0}\right) = -1$;
\item[(C)] $(q)$ is a prime ideal.
\end{itemize}
\end{lemma}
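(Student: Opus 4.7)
The plan is to apply the Chebotarev density theorem to a suitable abelian extension of $K$, extending the argument of Lemma~\ref{L: independentsplitting}(3) to incorporate the new residue-class condition (B). The approach naturally splits into two cases depending on whether the image of $\calO_K^\times$ in $(\calO_K/\pp_0)^\times$ contains a non-square.

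In the first case, suppose some unit $u \in \calO_K^\times$ is a non-square modulo $\pp_0$. Let $L = K(\sqrt{a},\sqrt{b})$ and let $H$ be the Hilbert class field of $K$; by the construction of $a,b$ in Lemma~\ref{L: independentsplitting}, $L$ and $H$ are linearly disjoint over $K$, so $\Gal(LH/K) \cong \{\pm 1\}^2 \times \Cl(K)$. I would apply Chebotarev to $LH/K$ with Frobenius prescribed as $((-1,-1),\,1)$, producing infinitely many principal primes $\qq$ satisfying (A) and (C). For each such $\qq$, the elements $q_0$ and $uq_0$ both generate $\qq$, and exactly one of them is a non-square modulo $\pp_0$, so choosing the appropriate generator $q$ also yields (B).

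In the second case, every unit of $\calO_K$ is a square modulo $\pp_0$. Then the residue class of a generator of a principal prime coprime to $\pp_0$, taken in $(\calO_K/\pp_0)^\times/\mathrm{squares}$, is well-defined on the ideal. The non-square character on $(\calO_K/\pp_0)^\times$, extended trivially to the sign factors at real places, vanishes on the image of $\calO_K^\times$ (by hypothesis), so by the standard ray-class exact sequence
\[
\calO_K^\times \to (\calO_K/\pp_0)^\times \times \prod_{v \text{ real}} \{\pm 1\} \to C_{\pp_0\cdot\infty} \to \Cl(K) \to 1
\]
it descends to a nontrivial quadratic character of $C_{\pp_0\cdot\infty}$, which by class field theory corresponds to a quadratic extension $K_\chi/K$ inside the ray class field of modulus $\pp_0\cdot\infty$. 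A principal prime $\qq$ coprime to $\pp_0$ then satisfies (B) if and only if $\qq$ is inert in $K_\chi/K$. I would apply Chebotarev to $L \cdot H \cdot K_\chi / K$ with Frobenius $((-1,-1),\,1,\,-1)$, obtaining infinitely many primes $\qq$ satisfying (A), (B), and (C) simultaneously.

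The main obstacle is verifying, in the second case, that $K_\chi$ is linearly disjoint from $L \cdot H$ over $K$ so that the prescribed Frobenius actually corresponds to an element of the Galois group of the compositum. This holds because $K_\chi/K$ is ramified at $\pp_0$ (as $\pp_0$ appears nontrivially in the defining modulus of the character), whereas $L/K$ is unramified at $\pp_0$ (using $\pp_0 \nmid \mm_0$ together with the standing assumption from Lemma~\ref{L: artin} that $\mm_0$ is divisible by all primes dividing $2ab$) and $H/K$ is unramified everywhere; since $K_\chi/K$ is quadratic, this forces $K_\chi \cap LH = K$, as required.
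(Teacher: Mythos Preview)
Your Case~1 is correct and is a pleasant shortcut. The gap is in Case~2, in the sentence ``it descends to a nontrivial quadratic character of $C_{\pp_0\cdot\infty}$.'' The ray-class exact sequence only tells you that the quadratic residue character, once it is trivial on units, factors through the \emph{image} $N$ of $(\calO_K/\pp_0)^{\times}\times\prod_{v\text{ real}}\{\pm 1\}$ in $C_{\pp_0\cdot\infty}$, i.e.\ through $\ker\bigl(C_{\pp_0\cdot\infty}\to\Cl(K)\bigr)$. To produce your quadratic field $K_\chi$ you must then \emph{extend} this order-$2$ character from $N$ to all of $C_{\pp_0\cdot\infty}$, and that extension need not exist with values in $\{\pm 1\}$. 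For a concrete failure, take $K=\Q(\sqrt{-5})$ and $\pp_0$ a prime above $29$: then $\calO_K^{\times}=\{\pm 1\}$ with $-1$ a square mod $\pp_0$ (so you are in Case~2), one computes that $C_{\pp_0}$ is cyclic of order $28$ with $N=2C_{\pp_0}$ of order $14$, and the unique nontrivial character $C_{\pp_0}\to\{\pm 1\}$ restricts trivially to $N$. Hence your $\chi$ on $N$ admits no quadratic extension and no quadratic $K_\chi$ exists; your disjointness argument, which hinges on $[K_\chi:K]=2$, then has nothing to apply to.

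The fix is small and brings you essentially to the paper's argument: drop the attempt to realise the condition via a quadratic subfield and apply Chebotarev directly to $L\cdot M$, where $M$ is the full ray class field of modulus $\pp_0\cdot\infty$ (so $H\subseteq M$ and $\Gal(M/K)\cong C_{\pp_0\cdot\infty}$), prescribing the Frobenius to be $\bigl((-1,-1),\nu\bigr)$ for any $\nu\in N$ with $\chi(\nu)=-1$; linear disjointness of $L$ from $M$ follows from the same ramification-at-$\pp$-and-$\pp'$ argument you used. The paper does exactly this, only with modulus $\mm\pp_0$ in place of $\pp_0\cdot\infty$, and without the case split: it singles out a class in $K_{\mm\pp_0}/K_{\mm\pp_0,1}\cong K_{\mm}/K_{\mm,1}\times(\calO_K/\pp_0)^{\times}$ combining the $\psi=(-1,-1)$ condition with a nonsquare residue, and then reads off the required generator $q$ from the ray class of a Chebotarev prime.
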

\begin{proof}
We begin by first showing that there exists an $x' \in K^{\times}$ satisfying $\psi((x')) = (-1,-1)$ and $\left(\frac{x'}{\pp_0}\right) = -1$. Let
$$K_{\mm'} := K^{S(\mm')} = \{\alpha \in K^{\times} \mid \ord_{\pp}(\alpha) = 0 \textup{ for all } \pp|\mm'\}.$$
For any modulus $\mm'$ of $K$, there is a canonical isomorphism
$$K_{\mm'}/K_{\mm',1} \simeq \prod_{\stackrel{\pp|\mm'}{\pp \textup{ real}}}\{\pm\} \times (\calO_K/\mm_0')^{\times},$$

Since $\mm$ is a modulus for $K$, so is $\mm \pp_0$. Then in particular, we have the canonical isomorphisms
\begin{align}
K_{\mm}/K_{\mm,1} &\simeq  \prod_{\stackrel{\pp|\mm}{\pp \textup{ real}}}\{\pm\} \times (\calO_K/\mm_0)^{\times} \textup{ and }\\
K_{\mm \pp_0}/K_{\mm \pp_0,1} &\simeq  \prod_{\stackrel{\pp|\mm \pp_0}{\pp \textup{ real}}}\{\pm\} \times (\calO_K/\mm_0 \pp_0)^{\times}\\
\label{E: isomorphism}
&\simeq  K_{\mm}/K_{\mm,1} \times (\calO_K/\pp_0)^{\times},
\end{align}
where the last isomorphism follows by the Chinese remainder theorem, since $\pp_0 \nmid \mm$. Since $K_{\mm}/K_{\mm,1}$ surjects onto the group of ray classes of principal ideals modulo $\mm$, each element of $K_{\mm}/K_{\mm,1}$ determines a ray class of principal ideals modulo $\mm$. Similarly, each element of $K_{\mm \pp_0}/K_{\mm \pp_0,1}$ determines a ray class of principal ideals modulo $\mm \pp_0$.

By Lemma \ref{L: independentsplitting} (3), there exists a principal ideal $(x_1)$ such that $\psi((x_1)) = (-1,-1)$. Then choose a ray class $x'' \in K_{\mm\pp_0}/K_{\mm\pp_0,1}$ mapping under \ref{E: isomorphism} to the class of $x_1$ in $K_{\mm}/K_{\mm,1}$ and to a nonsquare in $(\calO_K/\pp_0)^{\times}$. Then this mod-$\mm\pp_0$ ray class consists of principal ideals $(x')$ satisfying $\psi((x')) = (-1,-1)$ and $\left(\frac{x'}{\pp_0}\right)=-1$. By the Chebotarev density theorem, there are infinitely many prime ideals in the same ray class.  For any such ideal $\qq$, since $\qq$ maps to an element of $K_{\mm}/K_{\mm,1}$, it is principal, say $\qq = (q)$.  This $q$ satisfies (A), (B), and (C).
\end{proof}

\begin{defn}
For $\sigma \in \Gal(K(\sqrt a, \sqrt b)/K)$, define
\begin{eqnarray*}
\widetilde{\Phi_{\sigma}} &:=& K^{{\times}2} \cdot \Phi_{\sigma}\\
\Psi &:=& \left\{(p,q) \in \widetilde{\Phi_{(1,1)}} \times \widetilde{\Phi_{(-1,-1)}} \textup{ }\Bigg|\textup{ } \prod_{\pp | \mm}(ap,q)_{\pp} = -1 \textup{ and } p \in a \cdot K^{\times 2} \cdot (1+J(R_q^{[-1,-1]}))\right\}.
\end{eqnarray*}
\end{defn}

\begin{lemma}
\label{L: diophantine2}
\hfill
\begin{itemize}
\item[(a)] $\Psi$ is Diophantine in $K$.
\item[(b)] For $(p,q) \in \Psi$, we have $\Delta_{ap,q} \cap \Delta_{bp,q} \cap I^{S(\mm)} \neq \emptyset$, and consequently, $J(R_{p,q}^{[1,1]})$ is Diophantine in $K$.
\item[(c)] For each prime ideal $\pp_0$ satisfying $\pp_0 \nmid \mm$ and $\psi(\pp_0) = (1,1)$, there exists $(p,q) \in \Psi$ such that $\Delta_{ap,q} \cap \Delta_{bp,q} = \{\pp_0\}$.
\end{itemize}
\end{lemma}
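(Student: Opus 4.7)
For part (a), each of the four constraints defining $\Psi$ is individually Diophantine. The sets $\widetilde{\Phi_{\sigma}} = K^{\times 2} \cdot \Phi_{\sigma}$ are Diophantine by Lemma \ref{L: diophantine1}(a). The condition $\prod_{\pp|\mm}(ap, q)_{\pp} = -1$ unfolds into a finite disjunction over odd-size subsets $T \subseteq S(\mm)$ of the conjunction ``$(ap, q)_{\pp} = -1$ for $\pp \in T$ and $+1$ otherwise,'' each factor being a Diophantine local norm condition at a fixed place. And $a \cdot K^{\times 2} \cdot (1 + J(R_q^{[-1,-1]}))$ is Diophantine in $(p, q)$ because $J(R_q^{[-1,-1]})$ is Diophantine uniformly in $q$ via Lemma \ref{L: diophantine1}(b) and Corollary \ref{C: diophantineJ}.

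For part (b), I first claim $(ab, q)_{\pp} = 1$ for every $\pp \nmid \mm$: $ab$ is a $\pp$-adic unit there (as $(a), (b)$ both divide $\mm$ by Lemma \ref{L: independentsplitting}(4)), and if $v_{\pp}(q)$ is odd then $\pp \in \PP(q) \subseteq \PP^{[1,1]} \cup \PP^{[-1,-1]}$ (from $q \in \widetilde{\Phi_{(-1,-1)}}$), so $\overline{ab}$ is a residue square in either case. Multiplicativity of the Hilbert symbol then gives $(ap, q)_{\pp} = (bp, q)_{\pp}$ for all $\pp \nmid \mm$, so $\Delta_{ap,q} \cap I^{S(\mm)} = \Delta_{bp,q} \cap I^{S(\mm)}$. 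Hilbert reciprocity $\prod_v (ap, q)_v = 1$ combined with the hypothesis $\prod_{\pp|\mm}(ap,q)_{\pp} = -1$ yields $\prod_{\pp \nmid \mm}(ap, q)_{\pp} = -1$, so this common set has odd positive cardinality and hence $\Delta_{ap,q} \cap \Delta_{bp,q} \cap I^{S(\mm)}$ is nonempty. For the Diophantineness of $J(R_{p,q}^{[1,1]})$, one repeats the argument of Lemma \ref{L: diophantine1}(b) to obtain $J(R_{p,q}^{[1,1]}) = J_{ap,q} + J_{bp,q}$ (using that at $\pp \nmid 2\mm$ the symbols $(ap, q)_{\pp}$, $(bp, q)_{\pp}$ can be $-1$ only when $\pp \in \PP(p) \cup \PP(q)$, so the index sets line up); Corollary \ref{C: diophantineJ} completes the argument.

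For part (c), apply Lemma \ref{L: choosingq} to $\pp_0$ to produce $q \in K^{\times}$ with $(q)$ a prime ideal different from $\pp_0$, $\psi((q)) = (-1,-1)$, and $\bar q$ a nonsquare in $\F_{\pp_0}$. Next, by Chebotarev density applied in the compositum of $K(\sqrt a, \sqrt b, \sqrt q)$, the Hilbert class field of $K$, and the ray class field modulo $(q)$, produce a prime $\qq_1$ of $K$ satisfying: $\qq_1 \nmid \mm (q)$; $\psi(\qq_1) = (1,1)$; $\qq_1$ splits in $K(\sqrt q)/K$; $\qq_1 \in [\pp_0]^{-1}$ in $\Cl(K)$; and $\qq_1$ is in the ray class modulo $(q)$ chosen so any generator $p$ of the principal ideal $\pp_0 \qq_1$ has $\bar p \equiv \bar a \pmod{\F_{(q)}^{\times 2}}$. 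Then verify $p \in \Phi_{(1,1)}$ directly, and verify $p \in a \cdot K^{\times 2} \cdot (1 + J(R_q^{[-1,-1]}))$ via Hensel's lemma at $(q)$ (coprime to $2$ since $\psi((q)) = (-1,-1)$): indeed $J(R_q^{[-1,-1]}) = (q) \calO_{(q)}$ because $\PP^{[-1,-1]}(q) = \{(q)\}$, and the residue equality $\bar p \equiv \bar a$ gives $p/a \in K_{(q)}^{\times 2}$, which together with a weak-approximation choice of auxiliary square realises the membership. A place-by-place computation at $\pp \nmid \mm$ then shows $(ap, q)_{\pp} = -1$ only at $\pp_0$: at $\qq_1$ the splitting of $\qq_1$ in $K(\sqrt q)$ gives $+1$; at $(q)$ the residue equality $\bar p \equiv \bar a$ yields $+1$; and elsewhere both $v_{\pp}(p), v_{\pp}(q)$ vanish. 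Combined with part (b), $\Delta_{ap,q} \cap \Delta_{bp,q} \cap I^{S(\mm)} = \{\pp_0\}$, and Hilbert reciprocity forces $\prod_{\pp | \mm}(ap,q)_{\pp} = -1$, so $(p, q) \in \Psi$. The principal obstacle is this Chebotarev step: the auxiliary prime $\qq_1$ must simultaneously lie in a prescribed Frobenius class in $K(\sqrt a, \sqrt b, \sqrt q)/K$, a prescribed ideal class in $\Cl(K)$, and a prescribed ray class modulo $(q)$, and establishing the linear disjointness of all these extensions over $K$ relies on Lemma \ref{L: independentsplitting} together with the distinctness $\psi((q)) = (-1,-1) \neq (1,1) = \psi(\pp_0)$.
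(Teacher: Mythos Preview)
Your arguments for (a) and (b) are essentially correct and close to the paper's. One minor correction: in (b), the fact that $a$ and $b$ are $\pp$-adic units for $\pp\nmid\mm$ comes from the choice of $\mm$ in Lemma~\ref{L: artin} (where $\mm$ is taken divisible by every prime dividing $2ab$), not from Lemma~\ref{L: independentsplitting}(4). Your observation that $(ab,q)_{\pp}=1$ for all $\pp\nmid\mm$, hence $\Delta_{ap,q}$ and $\Delta_{bp,q}$ agree outside $\mm$, is in fact slightly cleaner than the paper's route, which instead uses the condition $p\in a\cdot K^{\times 2}\cdot(1+J(R_q^{[-1,-1]}))$ to exclude the case $\pp\in\PP^{[-1,-1]}$.

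For (c) your approach is genuinely different from the paper's: the paper constructs $p$ by invoking Theorem~\ref{tate} to prescribe the Hilbert symbols $(\,\cdot\,,p)_v$ simultaneously at all places, whereas you try to realize $p$ explicitly as a generator of $\pp_0\qq_1$ with $\qq_1$ found by Chebotarev. However, there is a real gap. Your place-by-place computation establishes only $\Delta_{ap,q}\cap\Delta_{bp,q}\cap I^{S(\mm)}=\{\pp_0\}$, not the full equality $\Delta_{ap,q}\cap\Delta_{bp,q}=\{\pp_0\}$ demanded by the statement (and used in Theorem~\ref{T: main}). At odd primes $\pp\mid\mm_0$ you are fine, since $v_{\pp}(p)=v_{\pp}(q)=0$ and the coprimality of $(a)$ and $(b)$ forces at least one of $(ap,q)_{\pp},(bp,q)_{\pp}$ to be $+1$. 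But for $\pp\mid 2$ you have $(ap,q)_{\pp}=(bp,q)_{\pp}=(p,q)_{\pp}$ since $a,b$ are $\pp$-adic squares, and the dyadic Hilbert symbol of two units is \emph{not} automatically trivial; nothing in your Chebotarev step (which only constrains $\qq_1$ modulo $(q)$) controls $p$ modulo high powers of primes above $2$. Likewise, at a real place $\sigma$ one has $\sigma\in\Delta_{ap,q}\cap\Delta_{bp,q}$ exactly when $\sigma(p),\sigma(q)<0$, and you never arrange total positivity for $p$ or $q$. The paper handles both issues precisely by the Tate-theorem prescription, forcing $(p,e)_{\pp}=1$ for all $\pp\mid\mm$ and $(p,q)_\sigma=1$ at every archimedean $\sigma$. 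To rescue your approach you would have to run Chebotarev in a ray class field of modulus $\mm'(q)$ with $\mm'$ large enough to control $p$ at all places of $\mm$, and then the compatibility check you flag as the ``principal obstacle'' becomes substantially harder than what Lemma~\ref{L: independentsplitting} alone supplies.
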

\begin{proof}
\hfill
\begin{itemize}
\item[(a)] By Lemma \ref{L: diophantine1}(a), $\Phi_{\sigma}$ is Diophantine, so $\widetilde{\Phi_{\sigma}}$ is Diophantine. The fact that $\Psi$ is Diophantine follows from the previous sentence, Lemma \ref{L: diophantine1}(b) and the fact that the condition $\prod_{\pp|\mm}(ap,q)_{\pp} = -1$ consists of finitely many local conditions and hence it cuts out a Diophantine set from $K^{\times} \times K^{\times}$, by \cite{PooShl05}, Proposition 2.2.

\item[(b)] Suppose that $(p,q) \in \Psi$. By Hilbert reciprocity, there is at least one prime ideal $\pp \nmid \mm$ such that $(ap,q)_{\pp} = -1$. Then either $v_{\pp}(ap)$ or $v_{\pp}(q)$ is odd by equation \ref{E: serre}. But $v_{\pp}(a) = 0$ and $p \in \widetilde{\Phi_{(1,1)}}$ and $q \in \widetilde{\Phi_{(-1,-1)}}$, so $\pp \in \PP(p) \cup \PP(q) \subseteq \PP^{[1,1]} \cup \PP^{[-1,-1]}$. 

We claim that $\pp \in \PP^{[1,1]}$. Suppose otherwise, that is, $\pp \in \PP^{[-1,-1]}$. Then in particular, $\pp \notin \PP(p)$, so $v_{\pp}(ap)$ is even. So $v_{\pp}(q)$ must be odd. Therefore, $\pp \in \PP^{[-1,-1]}(q)$, which means that $R_q^{[-1,-1]} \subset \calO_{\pp}$. Then we have by Definition \ref{D: JacobsonRadical} that $J(R_q^{[-1,-1]}) \subset \pp \calO_{\pp}$, which implies by Hensel's lemma that $1 + J(R_q^{[(-1,-1)]}) \subset \calO_{\pp}^{\times 2}$. From $p \in a \cdot K^{\times 2} \cdot (1 + J(R_q^{[-1,-1]})$, we deduce that $ap \in K_{\pp}^{\times 2}$, which implies that $(ap,q)_{\pp} = 1$, a contradiction. Hence $\pp \in \PP^{[1,1]}$, as claimed.

On the other hand, if $v_{\pp}(q)$ is even, then $(a,q)_{\pp} = (b,q)_{\pp} = 1$. If $v_{\pp}(q)$ is odd, then by Lemma \ref{L: artin}, $(a,q)_{\pp} = (b,q)_{\pp} = -1$. In either case, $(a,q)_{\pp} = (b,q)_{\pp}$, so $(bp,q)_{\pp} = (ap,q)_{\pp} = -1$, so $\pp \in \Delta_{ap,q} \cap \Delta_{bp,q}$. Since we had $\pp \nmid \mm$, in fact $\pp \in \Delta_{ap,q} \cap \Delta_{bp,q} \cap I^{S(\mm)}$. Hence, $\Delta_{ap,q} \cap \Delta_{bp,q} \cap I^{S(\mm)} \neq \emptyset$. Then $J(R_{p,q}^{[1,1]}) = J_{ap,q} + J_{bp,q}$, so $J(R_{p,q}^{[1,1]})$ is Diophantine by Corollary \ref{C: diophantineJ}.

\item[(c)] Fix a prime $\pp_0 \nmid \mm$ satisfying $\psi(\pp_0) = (1,1)$. We would like to find $(p,q) \in \Psi$ such that $\pp_0 \in \Delta_{ap,q} \cap \Delta_{bp,q}$.

For each $\pp|\mm_0$, let $E_{\pp}$ be a finite subset of $K^{\times}$ whose image in $K_{\pp}^{\times}/K_{\pp}^{\times 2}$ is a basis. By Chinese remainder theorem, we choose each $e \in \bigcup_{\pp|\mm_0} E_{\pp}$ so that $e \equiv 1 \pmod{\pp_0}$. Since $\bigcup_{\pp|\mm_0} E_{\pp}$ is finite, there are only finitely many prime ideals that are generated by elements in it.

By Lemma \ref{L: choosingq}, there are infinitely many ideals $(q)$ with $q \in K^{\times}$ satisfying:
\begin{itemize}
\item[(A)] $\psi((q)) = (-1,-1)$;
\item[(B)] $\left(\frac{q}{\pp_0}\right) = -1$;
\item[(C)] $(q)$ is a prime ideal.
\end{itemize}
Choose an ideal $(q)$ that is not generated by elements in $\bigcup_{\pp|\mm_0} E_{\pp}$. We note that this choice of $q$ implies that
\begin{eqnarray}
\label{E: choiceofq}
\Delta_{a,q} \cap \Delta_{b,q} = \{(q)\}.
\end{eqnarray}
By (A), $q \in \Phi_{(-1,-1)} \subseteq \widetilde{\Phi_{(-1,-1)}}$. Also, we choose $e_0$ so that $\left(\frac{e_0}{(q)}\right) = -1$ and $\left(\frac{e_0}{\pp_0}\right) = 1$. From $\left(\frac{e_0}{(q)}\right) = -1$, the image of $\{q, e_0\} \subseteq K^{\times}$ is a basis for $K_{(q)}/K_{(q)}^{\times 2}$. Further, $v_{(q)}(e_0) = 0$, and $v_{\pp_0}(e_0)$ is even.


We claim that there exists $p \in K^{\times}$ satisfying the following constraints. 

\begin{itemize}
\item[(1)] $(e,p)_{\pp} = 1$ for all $\pp | \mm_0$ and $e \in E_{\pp}$.
\item[(2)] $(e_0,p)_{(q)} = 1$ and $(q,p)_{(q)} = -1$. 
\item[(3)] $(a,p)_{\pp} = 1$ and $(b,p)_{\pp} = 1$
for each $\pp \nmid \mm$.
\item[(4)] $(q,p)_{\pp_0} = -1$.
\item[(5)] $\prod_{\pp|\mm}(ap,q)_{\pp} = -1$.
\item[(6)] For all archimedean places $\qq$, $(p,q)_{\qq} = 1$.
\end{itemize}

We will use Theorem \ref{tate} to choose such a $p$ according to the prescription of the Hilbert symbol given by the following table. The columns are indexed by the places $v$, the rows are indexed by the $a_i$ to be used in Theorem \ref{tate}, and the entries in the table are the $\varepsilon_{i,v}$. 

\begin{center}
\begin{tabular}{| c || c | c | c |}
	\hline    
    & $\pp_0$ & $(q)$ & all other places \\
    \hline \hline
    any $e \in E_{\pp}$ for $\pp | \mm_0$ & $1$ & $1$ & $1$\\
    \hline
    $e_0$ & $1$ & $1$ & $1$\\
    \hline
    $q$ & $-1$ & $-1$ & $1$\\
    \hline
    $a$ & $1$ & $1$ & $1$\\
    \hline
    $b$ & $1$ & $1$ & $1$\\
    \hline
\end{tabular}
\end{center}

Almost all entries in the above table are $1$. Also, the product of the entries in each row is $1$. For the existence of a local element satisfying the prescription in the $(q)$-column, we claim that $a$ is one such element: $(a,e)_{(q)} = 1$ for all $e \in \bigcup_{\pp|\mm_0}E_{\pp} \cup \{a,b, e_0\}$, since $a$ and $e$ are $q$-adic units. From Lemma \ref{L: artin}, $(a,q)_{(q)} = -1$. For the existence of a local element satisfying the prescription in the $\pp_0$-column, we claim that any $x \in \pp_0 - \pp_0^2$ works. By equation \ref{E: serre}, $(x,q)_{\pp_0} = -1$, since $q$ is not a square modulo $\pp_0$ by construction. Further, $(x,e)_{\pp_0} = 1$ for $e \in E_{\pp}$ with $\pp|\mm_0$ by equation \ref{E: serre} and the choice of $e$ such that $e \equiv 1 \pmod{\pp_0}$. Also, $(x,a)_{\pp_0} = (x,b)_{\pp_0} = 1$ since $\psi(\pp_0) = (1,1)$, by Lemma \ref{L: artin}. Finally, by equation \ref{E: serre}, $(x,e_0)_{\pp_0} = \left(\frac{e_0}{\pp_0}\right) = 1$, so $x$ satisfies the prescription on the $\pp_0$-column.


Thus, the conditions of Theorem \ref{tate} hold, so there exists $p$ satisfying the above prescription of Hilbert symbols. We now show that this $p$ satisfies the five constraints given earlier in the proof. Constraints (1), (2), (3) and (4) are evident from the definition of $p$: constraint (1) from the first row; constraint (2) from the second and third rows; constraint (3) from the last entries of fourth and fifth rows; and constraint (4) comes from the prescription in the $(q, \pp_0)$-entry. Constraint (5) is automatic, since
\begin{align*}
\prod_{\pp|\mm}(ap,q)_{\pp} &= \prod_{\pp|\mm}(a,q)_{\pp} \textup{ (since } (p,q)_{\pp} = 1 \textup{ for all } \pp|\mm \textup{)}\\
&= \prod_{\pp \nmid \mm}(a,q)_{\pp} \textup{ (Hilbert Reciprocity)}\\
&= (a,q)_{(q)} = -1.
\end{align*}
Finally, constraint (6) is clear from the prescription of the third row.

Since $(e_0,a)_{(q)} = 1$ and $(q,a)_{(q)} = -1$, the first half of (2) implies that $(e_0, ap)_{(q)} = 1$ and $(q, ap)_{(q)} = 1$, so $v_{(q)}(ap)$ is even. Choose $y \in K^{\times}$ such that $v_{(q)}(y) = v_{(q)}(ap)/2$, and let $z = ap/y^2$. Then $v_{(q)}(z) = 0$, and $(q,z)_{(q)} = (e_0, z)_{(q)} = 1$. Since the Hilbert symbol is a nondegenerate pairing, $z$ is a $(q)$-adic square. Thus, $ap = z y^2$ is also a $(q)$-adic square.

Now we will show that $(p,q) \in \Psi$. By the nondegeneracy of the Hilbert symbol as a bilinear pairing on $K_v^{\times}/K_v^{{\times}2} \times K_v^{\times}/K_v^{{\times}2} \to \{\pm 1\}$, the first constraint implies that $v_{\pp}(p)$ is even for each $\pp|\mm_0$. Now, using weak approximation, find some $r \in K^{\times}$ satisfying $v_{\pp}(r) = v_{\pp}(p)/2$ for each $\pp|\mm_0$. We may divide $p$ by $r^2$, without changing any of the Hilbert symbols 
involving $p$, so as to assume that $(p) \in I^{S(\mm)}$. We claim that constraint (3) implies that $\psi((p)) = (1,1)$. Write $(p) = \prod_{\pp}\pp^{e_{\pp}}$.
For each prime $\pp \nmid \mm$, we have $(a,p)_{\pp} = (b,p)_{\pp} = 1$. then either $v_{\pp}(p)$ is even, or $a$ and $b$ are squares mod $\pp$. If $e_{\pp}$ is odd, then $\psi(\pp) = (1,1)$ since $a$ and $b$ are squares mod $\pp$. If $e_{\pp}$ is even, then $\psi(\pp)^{e_{\pp}} = (1,1)$. Hence, $\psi((p)) = \prod_{\pp} \psi(\pp)^{e_{\pp}} = (1,1)$. The two conditions in (2) implies that $ap$ is a $(q)$-adic square. Then $p \in a \cdot K^{{\times}2} \cdot (1 + J(R_{q}^{[(-1,-1)]}))$, by the following reason: 
From \ref{E: choiceofq}, $\Delta_{a,q} \cap \Delta_{b,q} = \{(q)\}$. Since $a$ and $b$ were chosen to be $\pp$-adic squares for $\pp|2$, $J(R_q^{[(-1,-1)]}) = q(\calO_K)_{(q)}$. Then $K^{\times 2}(1+J(R_q^{[(-1,-1)]})) = K_{(q)}^{\times 2} \cap K^{\times}$. Since $ap \in K_{(q)}^{\times 2} \cap K^{\times}$, the equality from the previous sentence gives that $p \in a \cdot K^{\times 2} \cdot (1 + J(R_q^{[(-1,-1)]}))$.  Thus, constraints (1),(2),(3),(5) give that $(p,q) \in \Psi$. 

From (4), $(ap,q)_{\pp_0} = (bp,q)_{\pp_0} = -1$. From the second half of (2), $(ap,q)_{(q)} = 1$ since $(a,q)_{(q)} = -1$. For any other place $\qq \nmid \mm$, $(ap,q)_{\qq} = 1$ by the prescription of the Hilbert symbols along the $q$-row. Thus, $\Delta_{ap,q} \cap \Delta_{bp,q} \cap I^{S(\mm)} = \{\pp_0\}$.

Now, if we could show that $\Delta_{ap,q} \cap \Delta_{bp,q}$ contains no primes dividing $\mm$, we would be done. Since $a$ and $b$ were chosen to be totally positive, $(a,q)_{\qq} = (b,q)_{\qq} = 1$ for any archimedean place $\qq$. Further, from the prescription of Hilbert symbols, $(p,q)_{\qq} = 1$ for any archimedean place $\qq$. Thus, no archimedean primes appear in $\Delta_{ap,q} \cap \Delta_{bp,q}$. Now suppose $\qq|2$. Then $(a,q)_{\qq} = 1$ since $a$ is a $2$-adic square, and $(p,q)_{\qq} = 1$ from the construction of $p$. This means $(ap,q)_{\qq} = 1$, so $\qq \notin \Delta_{ap,q} \cap \Delta_{bp,q}$. So suppose that $\qq|\mm$ and $\qq \nmid 2$. Then by constraint (1), $(p,q)_{\qq} = 1$. But then since $(a)$ and $(b)$ are coprime, at most one of $(a,q)_{\qq}$ and $(b,q)_{\qq}$ can be $-1$ by equation \ref{E: serre} since $v_{\qq}(q)$ is even. Hence, we again have $\qq \notin \Delta_{ap,q} \cap \Delta_{bp,q}$. This implies that $\Delta_{ap,q} \cap \Delta_{bp,q} \cap I^{S(\mm)} = \Delta_{ap,q} \cap \Delta_{bp,q} = \{\pp_0\}$. \end{itemize}
\end{proof}



\section{Proof of the main theorem}

For a semilocal subring $R = \bigcap_{\pp \in \Delta} \calO_{\pp}$ of $K$, where $\Delta$ is some finite set of finite places of $K$, we define
$$\widetilde{R} = \{x \in K \mid \not \exists y \in J(R) \textrm{ with } xy = 1\}.$$

\begin{lemma} 
\label{L: universal}
Keeping the notation from above,
\begin{itemize}
\item[(a)] If $J(R)$ is diophantine in $K$, then $\widetilde{R}$ is defined by a universal formula in $K$.
\item[(b)] $\widetilde{R} = \bigcup_{\pp \in \Delta}\calO_{\pp}$, provided that $\Delta \neq \emptyset$ (that is, $R \neq K$).
\end{itemize}
\end{lemma}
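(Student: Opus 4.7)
The plan is to treat the two parts separately: (a) is a formal logical manipulation that turns the definition of $\widetilde R$ directly into a universal formula, while (b) is a direct valuation-theoretic computation stemming from the Jacobson radical of a semilocal ring.

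For part (a), the definition of $\widetilde R$ is already the complement of an existentially defined set. Concretely, if $J(R)$ is cut out by a diophantine formula $\exists \bar z,\, h(y,\bar z)=0$ with $h$ a polynomial over $K$, then
$$x \in \widetilde R \iff \neg\,\exists y,\bar z\,\bigl(h(y,\bar z)=0 \wedge xy-1=0\bigr),$$
and the right-hand side is the negation of an existential formula in the tuple $(y,\bar z)$, hence a universal first-order formula in $x$. No additional work is needed beyond this syntactic step.

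For part (b), I would first identify $J(R)$ explicitly inside $K$. Since $\Delta$ is finite, $R=\bigcap_{\pp\in\Delta}\calO_{\pp}$ is a semilocal PID whose maximal ideals correspond bijectively to the places $\pp\in\Delta$, so its Jacobson radical is the intersection of these maximal ideals. Translated into valuation language this reads
$$J(R) \;=\; \{\, y \in K \mid v_{\pp}(y) \geq 1 \text{ for every } \pp \in \Delta\,\}.$$

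With this description in hand, I unwind the definition of $\widetilde R$. The element $0$ belongs to both sides of the proposed equality, using $\Delta\neq\emptyset$ so that $\bigcup_{\pp\in\Delta}\calO_{\pp}$ is nonempty and contains $0$. For $x\in K^{\times}$, the condition $x\in\widetilde R$ says precisely that $x^{-1}\notin J(R)$, which by the valuation description is equivalent to the existence of some $\pp\in\Delta$ with $v_{\pp}(x^{-1})\leq 0$, i.e.\ $v_{\pp}(x)\geq 0$, i.e.\ $x\in\calO_{\pp}$. Both inclusions fall out at once. There is no real obstacle in the argument; the only points requiring care are handling $x=0$ separately and invoking $\Delta\neq\emptyset$ to ensure that the existential ``some $\pp\in\Delta$'' is non-vacuous.
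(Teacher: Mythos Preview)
Your proof is correct and is essentially the standard argument; the paper itself does not give a proof but simply refers to \cite{Koe10}, Lemma 14, so there is no substantive approach to compare against. One minor remark: calling $R$ a PID is true but unnecessary---all you use is that the maximal ideals of the semilocal Dedekind ring $R=\bigcap_{\pp\in\Delta}\calO_{\pp}$ are exactly those coming from the $\pp\in\Delta$, which gives the valuation description of $J(R)$ you need.
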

\begin{proof}
See \cite{Koe10}, Lemma 14.
\end{proof}

\begin{theorem} 
\label{T: main}
For any number field $K$,
$$\calO_K = \bigcap_{\pp|\mm_0}\widetilde{(\calO_K)_{\pp}} \cap \left(\bigcap_{\sigma \neq (1,1)}\bigcap_{p \in \Phi_{\sigma}} \widetilde{R_p^{\sigma}}\right) \cap \bigcap_{(p,q) \in \Psi} \widetilde{R_{p,q}^{[1,1]}},$$
where $\Phi_{\sigma}$ and $\Psi$ are the diophantine sets defined in the previous section.
\end{theorem}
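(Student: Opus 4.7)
My plan is to prove the stated equality by showing both inclusions, with the reverse direction carrying the substance. For the easy inclusion $\calO_K \subseteq \mathrm{RHS}$: any $x \in \calO_K$ satisfies $v_\pp(x) \geq 0$ at every finite prime $\pp$, so $x \in \widetilde{(\calO_K)_\pp} = (\calO_K)_\pp$ for each $\pp \mid \mm_0$, and $x$ lies in every union $\widetilde{R_p^\sigma} = \bigcup_{\pp \in \Delta_{c,p} \cap \Delta_{d,p}} \calO_\pp$ (nonempty by Lemma \ref{L: diophantine1}(b)) and similarly in each $\widetilde{R_{p,q}^{[1,1]}}$ (nonempty by Lemma \ref{L: diophantine2}(b)), applying Lemma \ref{L: universal}(b).

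For the reverse inclusion, fix $x$ in the RHS and an arbitrary finite prime $\pp$ of $K$, and show $v_\pp(x) \geq 0$ by case analysis. If $\pp \mid \mm_0$, then $x \in \widetilde{(\calO_K)_\pp} = (\calO_K)_\pp$ directly (an easy unpacking: a $y \in \pp(\calO_K)_\pp$ with $xy = 1$ exists iff $v_\pp(x) < 0$). If $\pp \nmid \mm_0$ and $\psi(\pp) = \sigma \neq (1,1)$, I imitate the construction in the proof of Lemma \ref{L: diophantine1}(c): pick a prime $\qq$ in the ideal class $[\pp]^{-1}$ with $\psi(\qq) = (1,1)$, available by Lemma \ref{L: independentsplitting}(3), so that $(p) := \pp \qq$ is principal with $\psi((p)) = \sigma$ and $\PP(p) = \{\pp, \qq\} \subseteq \PP^{[\sigma]} \cup \PP^{[(1,1)]}$, i.e., $p \in \Phi_\sigma$. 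Since $\qq \notin \PP^{[\sigma]}$, Corollary \ref{C: nobadprimes} collapses the intersection to $\Delta_{c,p} \cap \Delta_{d,p} = \PP^{[\sigma]}(p) = \{\pp\}$ for the appropriate $(c,d) \in \{(a,b),(a,ab),(b,ab)\}$, whence $\widetilde{R_p^\sigma} = (\calO_K)_\pp$ by Lemma \ref{L: universal}(b), and so $v_\pp(x) \geq 0$. If instead $\psi(\pp) = (1,1)$, Lemma \ref{L: diophantine2}(c) directly supplies $(p,q) \in \Psi$ with $\Delta_{ap,q} \cap \Delta_{bp,q} = \{\pp\}$, yielding $\widetilde{R_{p,q}^{[1,1]}} = (\calO_K)_\pp$ and again $v_\pp(x) \geq 0$.

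I do not anticipate a genuinely hard step at this stage: the theorem is a clean packaging of Lemma \ref{L: universal} (which turns an existentially defined Jacobson radical into a universal definition of $\bigcup_\pp \calO_\pp$) together with the prime-isolation statements in Lemmas \ref{L: diophantine1}(c) and \ref{L: diophantine2}(c). The one point to keep in mind in the case $\sigma \neq (1,1)$ is ensuring that the auxiliary prime $\qq$ principalizing $\pp$ has Artin image $(1,1)$, so that the intersection $\Delta_{c,p} \cap \Delta_{d,p}$ collapses to exactly $\{\pp\}$ rather than a larger set; that is precisely the role of Lemma \ref{L: independentsplitting}(3), and all the genuine difficulty has already been absorbed there and in the proofs of the preceding lemmas.
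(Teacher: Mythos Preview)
Your proposal is correct and follows the same strategy as the paper: rewrite the tildes as unions via Lemma~\ref{L: universal}(b), handle $\pp\mid\mm_0$ trivially, handle $\psi(\pp)=(1,1)$ by Lemma~\ref{L: diophantine2}(c), and in the case $\sigma\neq(1,1)$ principalize $\pp$ by an auxiliary prime $\qq$ with $\psi(\qq)=(1,1)$ using Lemma~\ref{L: independentsplitting}(3). Your version is in fact slightly cleaner than the paper's: the paper first invokes Lemma~\ref{L: diophantine1}(c) to get some $p\in\Phi_\sigma$ with $\pp_0\in\PP^{[\sigma]}(p)$ and then constructs a second $p'\in\Phi_\sigma$ (your $(p)=\pp\qq$) so that $\PP^{[\sigma]}(p)\cap\PP^{[\sigma]}(p')=\{\pp_0\}$, whereas you observe directly that this $p'$ alone already satisfies $\PP^{[\sigma]}(p')=\{\pp_0\}$, making the first $p$ redundant.
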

\begin{proof}
By Lemmas \ref{L: diophantine1}(b) and \ref{L: diophantine2}(b), all the sets $\PP^{[\sigma]}(p)$ and $\Delta_{ap,q} \cap \Delta_{bp,q}$ are nonempty for $p \in \Phi_{\sigma}$ and $(p,q) \in \Psi$. So by Corollary \ref{C: nobadprimes}, Lemma \ref{L: universal}(b) and Definition \ref{D: Rp}, the right-hand side is equal to
$$\bigcap_{\pp|\mm_0}(\calO_K)_{\pp} \cap \left(\bigcap_{\sigma \neq (1,1)}\bigcap_{p \in \Phi_{\sigma}} \bigcup_{\pp \in \PP^{[\sigma]}(p)} (\calO_K)_{\pp}\right) \cap \bigcap_{(p,q) \in \Psi} \bigcup_{\pp \in \Delta_{ap,q} \cap \Delta_{bp,q}}(\calO_K)_{\pp}.$$
Assume first that $\pp_0$ is a prime ideal satisfying $\psi(\pp_0) = \sigma$, where $\sigma \neq (1,1)$. We claim that we can find $p,p' \in \Phi_{\sigma}$ such that 
$$(\calO_K)_{\pp_0} = \bigcup_{\pp \in \PP^{\sigma}(p)}(\calO_K)_{\pp} \cap \bigcup_{\pp \in \PP^{\sigma}(p')}(\calO_K)_{\pp}.$$
First suppose that $\sigma = (-1,-1)$. By Lemma \ref{L: diophantine1}(c), choose a $p \in \Phi_{\sigma}$ such that $\pp_0 \in \PP^{\sigma}(p)$, and let $\pp_1, \cdots, \pp_n$ be the rest of the primes in $\Delta_{a,p} \cap \Delta_{b,p}$. By Lemma \ref{L: independentsplitting}, choose a prime ideal $\qq$ in the ideal class of $\pp_0^{-1}$, with $\psi(\qq) = (1,1)$. Since there are infinitely many choices for $\qq$, we may further assume that $\qq$ is not equal to $\pp_1, \cdots, \pp_n$. Let $(p') = \pp_0 \qq$. Then $p' \in \Phi_{\sigma}$ by construction, and
$$(\Delta_{a,p} \cap \Delta_{b,p}) \cap (\Delta_{a,p'} \cap \Delta_{b,p'}) = \PP^{[-1,-1]}(p) \cap \PP^{[-1,-1]}(p') = \{\pp_0\},$$
where the first equality follows by Corollary \ref{C: nobadprimes}.
So the integrality at $\pp_0$ is imposed.
The arguments for $\sigma = (-1,1)$ and $\sigma = (1,-1)$ are similar to the above argument.

Now, if $\pp_0$ is a prime satisfying $\psi(\pp_0) = (1,1)$, then Lemma \ref{L: diophantine2} lets us choose $(p,q) \in \Psi$ such that $\{\pp_0\} = \Delta_{ap,q} \cap \Delta_{bp,q}$, so that
$$\bigcup_{\pp \in \Delta_{ap,q} \cap \Delta_{bp,q}}(\calO_K)_{\pp} = (\calO_K)_{\pp_0}.$$
Then the integrality at $\pp_0$ is imposed, and the theorem is proven.

\end{proof}

\begin{cor}
For any number field $K$, $\calO_K$ is defined by a first-order universal formula.
\end{cor}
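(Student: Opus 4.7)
The plan is to combine Theorem \ref{T: main} with Lemma \ref{L: universal}(a). Theorem \ref{T: main} presents $\calO_K$ as an intersection of sets of the form $\widetilde{R}$, indexed by a finite collection of "bad" primes together with parameters $p$ (resp.\ pairs $(p,q)$) drawn from the sets $\Phi_\sigma$ (resp.\ $\Psi$). Lemma \ref{L: universal}(a) turns each $\widetilde{R}$ into a universal first-order formula in $K$, provided $J(R)$ is diophantine; this hypothesis holds in every case appearing in the theorem. For $R = (\calO_K)_\pp$, the Jacobson radical is the single local condition $v_\pp(\,\cdot\,) \geq 1$; for $R = R_p^{[\sigma]}$ and $R = R_{p,q}^{[1,1]}$, diophantineness (uniformly in the parameters $p$ and $(p,q)$) is exactly the content of Lemma \ref{L: diophantine1}(b) and Lemma \ref{L: diophantine2}(b), via Corollary \ref{C: diophantineJ}.

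The step requiring the most care is merging the outer intersection $\bigcap_{p \in \Phi_\sigma}$ (and likewise $\bigcap_{(p,q) \in \Psi}$) with the inner universal definition of $\widetilde{R_p^{[\sigma]}}$ into a single universal formula, rather than producing a $\forall\exists$-alternation. The crucial observation is that $\Phi_\sigma$ and $\Psi$ are themselves diophantine, by Lemma \ref{L: diophantine1}(a) and Lemma \ref{L: diophantine2}(a), so the membership condition "$p \in \Phi_\sigma$" is given by an existential formula $\exists \vec u\, \varphi_\sigma(p,\vec u)$. The assertion "for every $p \in \Phi_\sigma$, $t \in \widetilde{R_p^{[\sigma]}}$" then reads
$$\forall p \,\bigl( (\exists \vec u\, \varphi_\sigma(p,\vec u)) \to (\forall \vec w\, \psi_\sigma(t,p,\vec w)) \bigr),$$
and, by pulling the inner existential through the implication, is logically equivalent to
$$\forall p,\vec u,\vec w \,\bigl( \varphi_\sigma(p,\vec u) \to \psi_\sigma(t,p,\vec w) \bigr),$$
which is a universal formula in $t$. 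The identical maneuver handles the outer intersection over $\Psi$.

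The remaining intersections in Theorem \ref{T: main} run over the finite set $\{\pp : \pp \mid \mm_0\}$ and over the three nontrivial Galois elements $\sigma \neq (1,1)$, so the full definition of $\calO_K$ is a finite conjunction of universal formulas, which is again universal. Equivalently, $K \setminus \calO_K$ is diophantine. I do not expect any genuine obstacle beyond the logical bookkeeping above, since all the technical work — the representation of $\calO_K$ as the intersection in Theorem \ref{T: main}, and the diophantine descriptions of $\Phi_\sigma$, $\Psi$, and of the various Jacobson radicals — has already been carried out in the preceding sections.
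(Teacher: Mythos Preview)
Your proposal is correct and follows essentially the same approach as the paper's own proof: invoke Lemmas \ref{L: diophantine1}(b) and \ref{L: diophantine2}(b) to get diophantineness of the Jacobson radicals, apply Lemma \ref{L: universal}(a) to make each $\widetilde{R}$ universal, and use Lemmas \ref{L: diophantine1}(a) and \ref{L: diophantine2}(a) for diophantineness of $\Phi_\sigma$ and $\Psi$. Your write-up is in fact more explicit than the paper's at the one nontrivial logical point---showing that an intersection over a diophantine index set of universally defined sets remains universal---which the paper leaves to the reader.
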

\begin{proof}
By Lemmas \ref{L: diophantine1}(b) and \ref{L: diophantine2}(b), $J(R_p^{\sigma})$ and $J(R_{p,q}^{[1,1]})$ are diophantine, for 
$$\sigma \in \Gal(K(\sqrt a, \sqrt b)/K)$$
with $\sigma \neq (1,1)$. Then by Lemma \ref{L: universal}(a), the right-hand side is defined by a universal formula. By Lemmas \ref{L: diophantine1}(a) and \ref{L: diophantine2}(a), $\Phi_{\sigma}$ and $\Psi$ are diophantine. Hence, the right-hand side appearing in the statement of Theorem \ref{T: main} is defined by a first-order universal formula.
\end{proof}

\section*{Acknowledgements}
I would like to thank my advisor, Bjorn Poonen, for suggesting this problem, for many helpful conversations, and for the comments on improving the exposition. I also thank John Tate for providing an outline of the proof of Theorem \ref{tate}, and for giving his permission to include it in this paper. I am also grateful to Gregory Minton and Bianca Viray for helpful discussions. 

\begin{bibdiv}
\begin{biblist}

\bib{CasFro86}{collection}{
  title={Algebraic number theory},
  booktitle={Proceedings of the instructional conference held at the University of Sussex, Brighton, September 1--17, 1965},
  editor={Cassels, J. W. S.},
  editor={Fr{\"o}hlich, A.},
  note={Reprint of the 1967 original},
  publisher={Academic Press Inc. [Harcourt Brace Jovanovich Publishers]},
  place={London},
  date={1986},
  pages={xviii+366},
  isbn={0-12-163251-2},
  review={\MR {911121 (88h:11073)}},
}

\bib{Koe10}{article}{
  author={Koenigsmann, Jochen},
  title={Defining $\mathbb {Z}$ in $\mathbb {Q}$},
  journal={Preprint, arXiv:1011.3424v1},
  date={2010},
}

\bib{MazRub10}{article}{
  author={Mazur, B.},
  author={Rubin, K.},
  title={Ranks of twists of elliptic curves and Hilbert's tenth problem},
  journal={Invent. Math.},
  volume={181},
  date={2010},
  number={3},
  pages={541--575},
  issn={0020-9910},
  review={\MR {2660452 (2012a:11069)}},
  doi={10.1007/s00222-010-0252-0},
}

\bib{PooShl05}{article}{
  author={Poonen, Bjorn},
  author={Shlapentokh, Alexandra},
  title={Diophantine definability of infinite discrete nonarchimedean sets and Diophantine models over large subrings of number fields},
  journal={J. Reine Angew. Math.},
  volume={588},
  date={2005},
  pages={27--47},
  issn={0075-4102},
  review={\MR {2196727 (2006m:11178)}},
  doi={10.1515/crll.2005.2005.588.27},
}

\bib{Poo09}{article}{
  author={Poonen, Bjorn},
  title={Characterizing integers among rational numbers with a universal-existential formula},
  journal={Amer. J. Math.},
  volume={131},
  date={2009},
  number={3},
  pages={675--682},
  issn={0002-9327},
  review={\MR {2530851 (2010h:11203)}},
  doi={10.1353/ajm.0.0057},
}

\bib{Rob49}{article}{
  author={Robinson, Julia},
  title={Definability and decision problems in arithmetic},
  journal={J. Symbolic Logic},
  volume={14},
  date={1949},
  pages={98--114},
  issn={0022-4812},
  review={\MR {0031446 (11,151f)}},
}

\bib{Ser73}{book}{
  author={Serre, J.-P.},
  title={A course in arithmetic},
  note={Translated from the French; Graduate Texts in Mathematics, No. 7},
  publisher={Springer-Verlag},
  place={New York},
  date={1973},
  pages={viii+115},
  review={\MR {0344216 (49 \#8956)}},
}

\bib{Ser79}{book}{
  author={Serre, Jean-Pierre},
  title={Local fields},
  series={Graduate Texts in Mathematics},
  volume={67},
  note={Translated from the French by Marvin Jay Greenberg},
  publisher={Springer-Verlag},
  place={New York},
  date={1979},
  pages={viii+241},
  isbn={0-387-90424-7},
  review={\MR {554237 (82e:12016)}},
}

\bib{Tat11}{article}{
  author={Tate, John},
  title={Personal Communication},
  date={2011},
}

\end{biblist}
\end{bibdiv}

\end{document}